%*****************************************************************************
%
%   Title: A vanishing theorem and symbolic powers
%   Authors: M.~Dumnicki, T.~Szemberg, H.~Tutaj-Gasi\'nska
%
%   4. February 2013
%
%   as put on the arxiv, vers.1
%*****************************************************************************

\documentclass[11pt]{article}
\usepackage[a4paper]{anysize}\marginsize{3.5cm}{3.5cm}{1.3cm}{2cm}
\pdfpagewidth=\paperwidth \pdfpageheight=\paperheight
\usepackage{amsfonts,amssymb,amsthm,amsmath,eucal}
\usepackage{pgf}
\usepackage{bbm,array}
\pagestyle{myheadings}
%\usepackage[inline]{showlabels}

%*****************************************************************************
% Style

%*****************************************************************************
% Theorems

\theoremstyle{plain}
\newtheorem{thm}{Theorem}[section]
\newtheorem{theorem}[thm]{Theorem}

\newtheorem{lemma}[thm]{Lemma}
\newtheorem{proposition}[thm]{Proposition}
\newtheorem{corollary}[thm]{Corollary}

\theoremstyle{definition}
\newtheorem{definition}[thm]{Definition}
\newtheorem{remark}[thm]{Remark}
\newtheorem{example}[thm]{Example}

\newtheorem{thevarthm}[thm]{\varthmname}

\newenvironment{varthm*}[1]{\trivlist\item[]{\bf #1.}\it}{\endtrivlist}

%*****************************************************************************
% Macros

\renewcommand\geq{\geqslant}

\renewcommand\leq{\leqslant}

\newcommand\be{\begin{eqnarray*}}
\newcommand\ee{\end{eqnarray*}}

\newcommand\C{\mathbb C}

\newcommand\N{\mathbb N}

\renewcommand\P{\mathbb P}

\newcommand\calo{{\mathcal O}}
\newcommand\cali{{\mathcal I}}

\newcommand\call{{\mathcal L}}
\newcommand\calm{{\mathcal M}}
\newcommand\frakm{{\mathfrak m}}
\def\field{\C}
\newcommand\newop[2]{\def#1{\mathop{\rm #2}\nolimits}}
\newop\log{log}
\newop\ord{ord}
\newop\Gal{Gal}
\newop\SL{SL}
\newop\Bl{Bl}
\newop\mult{mult}
\newop\mass{mass}
\newop\div{div}
\newop\codim{codim}
\newop\sing{sing}
\newop\vdim{vdim}
\newop\edim{edim}
\newop\Ass{Ass}
\newop\size{size}
\newop\reg{reg}
\newop\satdeg{satdeg}
\newcommand\eqnref[1]{(\ref{#1})}

\newcommand\rru{\rule{1cm}{0cm}}

\def\keywordname{{\bfseries Keywords}}%
\def\keywords#1{\par\addvspace\medskipamount{\rightskip=0pt plus1cm
\def\and{\ifhmode\unskip\nobreak\fi\ $\cdot$
}\noindent\keywordname\enspace\ignorespaces#1\par}}
\def\subclassname{{\bfseries Mathematics Subject Classification
(2000)}\enspace}
\def\subclass#1{\par\addvspace\medskipamount{\rightskip=0pt plus1cm
\def\and{\ifhmode\unskip\nobreak\fi\ $\cdot$
}\noindent\subclassname\ignorespaces#1\par}}

\def\sys{\mathcal{L}}

\begin{document}

\author{M.~Dumnicki, T.~Szemberg\footnote{The second named author was partially supported
by NCN grant UMO-2011/01/B/ST1/04875}, H.~Tutaj-Gasi\'nska}
\title{A vanishing theorem and symbolic powers of planar point ideals}
\date{\today}
\maketitle
\thispagestyle{empty}

\begin{abstract}
   The purpose of this note is twofold. We present first a vanishing theorem
   (Theorem A) for families of linear series with base ideal being a fat points
   ideal. We apply then this result in order to give a partial proof
   of a conjecture raised by Bocci, Harbourne and Huneke concerning containment
   relations between ordinary and symbolic powers of planar point ideals (Theorem B).
\keywords{vanishing, symbolic power, fat points, postulation problems}
\subclass{MSC 14C20 \and MSC 14F17 \and MSC 14H50 \and MSC 14J26 \and MSC 13A15 \and MSC 13F20}
\end{abstract}

%*****************************************************************************

\section{Introduction}
   One of the central problems in the theory of linear series
   is the study of linear systems of hypersurfaces in projective
   spaces with assigned base schemes. This problem is related
   to various other topics, for example to the polynomial
   interpolation, the Waring problem, the classification of
   defective higher secant varieties to mention a few in the
   realms of the algebraic geometry, to the problem
   of containment relations between ordinary and symbolic powers
   of ideals in commutative algebra and to problems
   in combinatorics \cite{Sul08}.

   Given a finite number $s$ of points $P_1,\dots,P_s$ in a projective
   space $\P^n$ and fixed integers $m_1,\dots,m_s$, one is interested
   in determining the dimension of the linear system
   $\call=\call_n(t;m_1,\dots,m_s)$ of hypersurfaces of a fixed degree $t$
   vanishing in the given set of points with prescribed multiplicities.
   The \emph{virtual dimension} of this space
   $$\vdim(\call)=\binom{n+t}{n}-\sum_{i=1}^s\binom{n+m_i-1}{n}-1$$
   arises by assuming that the conditions imposed by the underlying set
   of points are independent. The \emph{expected dimension}
   $$\edim(\call)=\max\left\{\vdim(\call),-1\right\}$$
   is just a modification of $\vdim(\call)$ taking into account
   the convention that the empty set has dimension $-1$. One always has
   \begin{equation}\label{eq:dim vdim}
      \dim(\call)\geq\edim(\call).
   \end{equation}
   If there is the equality in \eqnref{eq:dim vdim}, then we say
   that the linear system $\call$ is \emph{non-special}. Otherwise the
   system is \emph{special}.

   A subscheme $Z$ of $\P^n$ defined by an ideal of the form
   \begin{equation}\label{eq:fat points ideal}
      \cali_Z=\frakm_{P_1}^{m_1}\cap\dots\cap\frakm_{P_s}^{m_s},
   \end{equation}
   where $\frakm_{P}$ denotes the maximal ideal of a point $P\in\P^n$
   is called a \emph{fat points scheme} and the ideal $\cali_Z$ is called a \emph{fat points ideal}.
   It follows from the long cohomology sequence attached to the
   twisted structure sequence of $Z$
   $$0\to\cali_Z(t)\to\calo_{\P^n}(t)\to\calo_Z(t)\to 0$$
   that the system $\call$ is non-special exactly when the cohomology group
   \begin{equation}\label{eq:h1}
      H^1(\P^n,\cali_Z(t))
   \end{equation}
   vanishes. The system $\call$ is called \emph{$h^1$--regular} if
   it is non-special and effective, see Definition \ref{def:non spec and h1}.

   In the case of the projective plane, the non-speciality of linear
   series of type $\call$ is governed by the beautiful geometrical
   Segre-Harbourne-Gimigliano-Hirschowitz (SHGH for short) Conjecture,
   see e.g. \cite[Section 4]{Cil00} for precise statement and historical
   background.

   Since the SHGH Conjecture seems out of reach at present, it is of interest
   to provide other criteria ensuring the vanishing of the cohomology group in \eqnref{eq:h1}.
   Our first main result is the following vanishing theorem.
\begin{varthm*}{Theorem A}\label{thm:main 1}
   Let $P_1,\dots,P_s\in\P^2$ be $s\geq 4$ general planar points.
   Let $m_1\geq m_2\geq\dots\geq m_s\geq 1$ be fixed integers.
   If
   $$t\geq m_1+m_2\; \mbox{ and }\; \vdim(\call_2(t;m_1,\dots,m_s))\geq\frac12(3m_4^2-7m_4+2),$$
   then
   $$h^1(\P^2,\calo_{\P^2}(t)\otimes\frakm_{P_1}^{m_1}\otimes\dots\otimes\frakm_{P_s}^{m_s})=0$$
   i.e. the system $\call_2(t;m_1,\dots,m_s)$ is non-special.
\end{varthm*}
   Turning to the algebraic side of the story, let
   $\cali\subset\C[\P^n]=\C[x_0,\dots,x_n]$ be a homogeneous ideal.
   The $m$--th \emph{symbolic power} $\cali^{(m)}$ of $\cali$
   is defined as
   $$
   \cali^{(m)} = \field[\P^n] \cap \left( \bigcap_{\mathfrak{p} \in \Ass(\cali)} \cali^{m} \field[\P^n]_{\mathfrak{p}} \right),
   $$
   where the intersection is taken in the field of fractions of $\field[\P^n]$.
%\begin{example}\rm
   If $\cali$ is a fat points ideal as in \eqnref{eq:fat points ideal},
   then the symbolic power is simply given by
   $$\cali^{(m)}=\bigcap_{i=1}^s\frakm_{P_i}^{m\cdot m_i}.$$
%\end{example}
   There has been considerable interest in containment relations
   between usual and symbolic powers of homogeneous ideals over the
   last two decades. The most general results in this direction
   have been obtained with multiplier ideal techniques in characteristic
   zero by Ein, Lazarsfeld and Smith \cite{ELS01} and using tight
   closures in positive characteristic by Hochster and Huneke \cite{HoHu02}.
   Applying these results to a homogeneous ideal $\cali$ in the coordinate
   ring $\C[\P^n]$ of the projective space we obtain the following
   containment statement
\begin{equation}\label{eq:els containment}
   \cali^{(nr)}\subset \cali^r\;\mbox{ for all } r\geq 0.
\end{equation}
   There are examples showing that one cannot improve the power of the
   ideal $\cali$ on the right hand side of \eqnref{eq:els containment}.
   Nevertheless, it is natural to wonder to which extend this result can be improved
   for example under additional geometrical assumptions on the zero-locus of $\cali$.
   In particular, if $\cali$ is a fat points ideal, it is natural to wonder
   for which non-negative integers $m$, $r$ and $j$ there is the containment
   \begin{equation}\label{eq:containment problem}
      \cali^{(m)}\subset\calm^j\cali^r,
   \end{equation}
   where $\calm$ denotes the irrelevant ideal.
   Harbourne and Huneke suggested the following answer to that problem,
   \cite[Conjecture 2.1]{HaHu}. This assertion has guided our research leading to this note.
\begin{varthm*}{Conjecture}\label{conj:containment}
   Let $\cali$ be a fat points ideal in $\P^n$. Then
   $$\cali^{(nr)}\subset \calm^{r(n-1)}\cali^r$$
   for all $r\geq 1$.
\end{varthm*}
   This conjecture has been proved recently by
   Harbourne and Huneke for \emph{general} points
   in $\P^2$, \cite[Proposition 3.10]{HaHu}
   and by the first author for \emph{general}
   points in $\P^3$, \cite[Theorem 3]{Dum12}.
   In this note we extend these results to a large
   family of fat points ideals in $\P^2$. More specifically we show
\begin{varthm*}{Theorem B}\label{thm:main 2}
   Let $\cali=\frakm_{P_1}^{m_1}\cap\dots\cap\frakm_{P_s}^{m_s}$ be a fat
   points ideal supported on $s\geq 9$ general points in $\P^2$. If one of the
   following conditions holds
   \begin{itemize}
      \item[a)] at least $s-1$ among $m_i$'s are equal (\emph{almost homogeneous} case);
      \item[b)] $m_1\geq\dots\geq m_s\geq\frac{m_1}{2}$ (\emph{uniformly fat} case),
   \end{itemize}
   then the Conjecture above holds, i.e. there is the containment
   $$\cali^{(2r)}\subset\calm^r\cdot\cali^r$$
   for all $r\geq 1$.
\end{varthm*}
   We hope that more technical statements in Theorem \ref{marcin-witek},
   and Theorem \ref{zastosowanie} could be of independent interest when
   dealing with problems of similar flavor to those studied in this note.
\section{A reduction procedure}
   In this part we will be concerned with some operations on finite
   sequences of integers (which later on will be sequences of multiplicities).
   We begin by fixing some notation.
\begin{definition}
   Let $S=(a_1,a_2,\dots,a_k)$ be a sequence of non-negative integers.
   We call the number
   $$\size(S)=\sum_{i=1}^ka_i$$
   the \emph{size} of the sequence $S$.\\
   We say that a sequence $(a_1,\dots,a_k)$ \emph{is dominated}
   by a sequence $(b_1,\dots,b_{\ell})$ if
   $$k\leq \ell\;\mbox{ and }\; a_i\leq b_i\;\mbox{ for all }\; i=1,\dots,k.$$
\end{definition}
   The following reduction process was introduced in \cite{DumJar07}.
\begin{varthm*}{Reduction Algorithm with parameter $m$}\rm $\rule{.1cm}{0cm}$\\
   Let $S=(b_1,\dots,b_k,a_1,\dots,a_m)$ be a sequence of positive integers.
   \begin{itemize}
      \item[] $Z_m:=\left\{1,2,\dots,m\right\}$
      \item[] \textbf{for all} $k=m,m-1,\dots,1$ \textbf{do}
      \item[] \rru $z_k:=\max(Z_k)$
      \item[] \rru \textbf{if} ($a_k<m$ \textbf{and} $a_k \leq z_k$) \textbf{then} $r_k:=a_k$ \textbf{else} $r_k:=z_k$ \textbf{fi}
      \item[] \rru $c_k:=a_k-r_k$
      \item[] \rru $Z_{k-1}:=Z_k\setminus\left\{r_k\right\}$
      \item[] \rru \textbf{if} $Z_{k-1}=Z_k$ \textbf{then} stop \textbf{fi}
      \item[] \textbf{od}
   \end{itemize}
\end{varthm*}
   If the above process terminates by \textbf{stop}, then we say that
   the sequence $S$ \emph{is not $m$--reducible}. Otherwise $S$ \emph{is $m$--reducible}.
   The new sequence
   $$(b_1,\dots,b_k,c_1,\dots,c_m)$$
   is in that case called the \emph{$m$--reduction of} $S$.
   Note that
   \begin{equation}
      \size(b_1,\dots,b_k,c_1,\dots,c_m)=\size(b_1,\dots,b_k,a_1,\dots,a_m)-\frac{m(m+1)}{2}.
   \end{equation}
   We call the numbers $r_k$ appearing in the algorithm the \emph{reducers}.
   Thus at each stage of the reduction algorithm $Z_k$ is the set of available reducers.

   The following examples explain how the Algorithm works.
\begin{example}
   In the table below we present the input ($m$ and the indispensable sequence $a_1,\dots,a_m$),
   the reducers $r_1,\dots,r_m$, and the resulting sequence $c_1,\dots,c_m$, or the word \textbf{stop}, if the $m$--reduction fails.
   Algorithm steps are omitted.
$$
\begin{array}{cccccc}
&& m & a_1,\dots,a_m & r_1,\dots,r_m & c_1,\dots,c_m \\
A) && 3 & 5,5,5 & 1,2,3 & 4,3,2 \\
B) && 4 & 5,5,3,1 & 2,4,3,1 & 3,1,0,0 \\
C) && 3 & 4,1,3 & 2,1,3 & 2,0,0 \\
D) && 3 & 4,2,2 & -,2,2 & \textbf{stop} \, (Z_1 = Z_2)
\end{array}
$$
\end{example}
   The next example is more involved. We trace the algorithm steps.
\begin{example}\label{ex:with steps}
   In this example we will perform a sequence of reductions in such a way that the result of $j$--th reduction will be
   the input for $(j+1)$--st reduction. Observe that the Reduction Algorithm works on a sequence of positive integers, so we must
   shorten the sequence of integers by deleting zeroes at the end. We will start with $(a_1,\dots,a_{10})=(1,2,\dots,10)$ and use $m_j$--reductions,
   $m_1=m_2=m_3=4$, $m_4=\ldots=m_7=3$. We will gather all intermediate steps in the table, together with the used reducers.
   The table shows what happens to each sequence member.
   For example the seventh element, $a_7=7$, has been altered to $6$, then to $5$, $4$, $1$ and finally to $0$.
$$
\begin{array}{cccccccccc|c}
a_1 & a_2 & a_3 & a_4 & a_5 & a_6 & a_7 & a_8 & a_9 & a_{10} & m_j \\
1 & 2 & 3 & 4 & 5 & 6 & 7 & 8 & 9 & 10 & \\
  &   &   &   &   &   &-1 &-2 &-3 &-4 & 4 \\
1 & 2 & 3 & 4 & 5 & 6 & 6 & 6 & 6 & 6 & \\
  &   &   &   &   &   &-1 &-2 &-3 &-4 & 4 \\
1 & 2 & 3 & 4 & 5 & 6 & 5 & 4 & 3 & 2 & \\
  &   &   &   &   &   &-1 &-4 &-3 &-2 & 4 \\
1 & 2 & 3 & 4 & 5 & 6 & 4 & 0 & 0 & 0 & \\
  &   &   &   &-1 &-2 &-3 &   &   &   & 3 \\
1 & 2 & 3 & 4 & 4 & 4 & 1 &   &   &   & \\
  &   &   &   &-2 &-3 &-1 &   &   &   & 3 \\
1 & 2 & 3 & 4 & 2 & 1 & 0 &   &   &   & \\
  &   &   &-3 &-2 &-1 &   &   &   &   & 3 \\
1 & 2 & 3 & 1 & 0 & 0 &   &   &   &   & \\
  &-2 &-3 &-1 &   &   &   &   &   &   & 3 \\
1 & 0 & 0 & 0 &   &   &   &   &   &   & \\
\end{array}
$$
   This example is continued in Remark \ref{rem:ex with continued}
\end{example}
   The reader may check his understanding of the Reduction process verifying the claims
   in the following two examples.
\begin{example}
   The sequence $(2,3,4,5,2)$ is $4$-reducible, its $4$-reduction equals to $(2,2,1,1)$.
\end{example}
\begin{example}
   The sequence
   $(1,2,3,3)$ is not $4$-reducible (we get $r_4=r_3=3$).\\
   The sequence $(3,2,4,2)$ is not
   $4$-reducible (we get $r_2=r_4=2$).
\end{example}
   The following Lemma gives first insights into what happens to a sequence
   of integers under the reduction algorithm.
\begin{lemma}\label{lem:reddown}
   Let $S=(b_1,\dots,b_{\ell-1},b_{\ell},\dots,b_r)$ be a given sequence of integers
   to which the Reduction Algorithm has been successfully applied. Assume that the
   original sequence $S$ has been altered on positions with labels $\ell-1$ and $\ell$
   and that the resulting sequence
   has the form $(\dots,a_{\ell-1},a_{\ell},\dots)$.
   Then either $a_{\ell} = 0$ or
   $$b_{\ell-1}-b_{\ell}< a_{\ell-1}-a_{\ell}.$$
\end{lemma}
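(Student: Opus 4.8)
The plan is to rephrase the desired inequality as a transparent statement about the \emph{reducers} $r_k$ produced by the Reduction Algorithm, and then to extract that statement from the behaviour of the sets of available reducers $Z_k$. First I would pass to the internal notation of the algorithm: a successful run processes each of the last $m$ entries of $S$, and since every reducer satisfies $r_k\geq 1$ (it equals either the positive integer $a_k$ or the positive integer $\max Z_k$), every such entry is strictly decreased, while the fixed part $b_1,\dots$ is untouched. Hence the altered positions are precisely the last $m$ ones, so the two altered labels $\ell-1,\ell$ of the statement correspond to two consecutive indices $i-1,i$ of the algorithm with $2\leq i\leq m$, and in the algorithm's letters $b_\ell=a_i$, $b_{\ell-1}=a_{i-1}$, $a_\ell=c_i$, $a_{\ell-1}=c_{i-1}$. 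Using $c_j=a_j-r_j$ one computes
\[
(a_{\ell-1}-a_{\ell})-(b_{\ell-1}-b_{\ell})\;=\;(c_{i-1}-c_i)-(a_{i-1}-a_i)\;=\;r_i-r_{i-1},
\]
so the conclusion $b_{\ell-1}-b_{\ell}<a_{\ell-1}-a_{\ell}$ is equivalent to $r_{i-1}<r_i$. Since $a_\ell=c_i$, it therefore suffices to prove: \emph{if $c_i\neq 0$ then $r_{i-1}<r_i$.}

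The heart of the argument is then a short observation about maxima of shrinking sets. In a successful run one never reaches \textbf{stop}, so each $r_k$ belongs to $Z_k$ and is genuinely removed from it; consequently $Z_k=\{1,\dots,m\}\setminus\{r_m,\dots,r_{k+1}\}$ has exactly $k$ elements. Now assume $c_i\neq 0$. Then $r_i\neq a_i$, so the else-branch of the algorithm applies and $r_i=z_i=\max Z_i$. Since $i\geq 2$, the set $Z_i$ has at least two elements, whence $Z_{i-1}=Z_i\setminus\{\max Z_i\}$ is nonempty and $z_{i-1}=\max Z_{i-1}<\max Z_i=r_i$. Finally, whichever branch is executed at step $i-1$ one has $r_{i-1}\leq z_{i-1}$: this is immediate if $r_{i-1}=z_{i-1}$, and if $r_{i-1}=a_{i-1}$ then the very condition that triggered that branch guarantees $a_{i-1}\leq z_{i-1}$. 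Combining, $r_{i-1}\leq z_{i-1}<r_i$, as desired.

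The only step that requires genuine care is the bookkeeping in the reduction to the statement about reducers: correctly identifying the global labels $\ell-1,\ell$ with the algorithm's internal indices and checking that both fall inside the processed block, so that $i\geq 2$ and the sets $Z_{i-1},Z_i$ are both available. Once this identification is made, the combinatorial core above is essentially one line.
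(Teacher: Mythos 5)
Your proposal is correct and follows essentially the same route as the paper: the key point in both is that a nonzero result at position $\ell$ forces $r_\ell=z_\ell=\max Z_\ell$, whence $r_{\ell-1}\leq z_{\ell-1}<r_\ell$, and the inequality follows from the same one-line computation. Your extra bookkeeping (identifying the altered positions with the last $m$ entries and checking $|Z_k|=k$ so that $z_{i-1}$ exists) only makes explicit what the paper leaves implicit.
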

\proof
   In the reduction process either $b_\ell$ is reduced to zero, or the number $r_{\ell}$, which we substract from $b_{\ell}$,
   is chosen as a maximum $z_{\ell}$
   of $Z_{\ell}$, which gives $z_{\ell-1} < z_{\ell}$. Since always $r_{\ell-1} \leq z_{\ell-1}$, the number $r_{\ell}$ which we
   subtract from $b_{\ell}$ is always greater
   than the number $r_{\ell-1}$ which we subtract from $b_{\ell -1}$.
   Thus
   $$a_{\ell-1}-a_{\ell}=b_{\ell-1}-r_{\ell-1}-b_{\ell}+r_{\ell}=b_{\ell-1}-b_{\ell}+r_{\ell}-r_{\ell-1}>b_{\ell-1}-b_{\ell}.$$
\endproof
\begin{corollary}\label{cor:reddown}
   Assume that $(a_1,\dots,a_r)$ is a sequence of integers
   obtained by a sequence of successful reductions applied consecutively to the
   sequence $(1,2,\dots,r)$ (some of $a_j$'s might be zero). Then, for
   every $k=1,\dots,r$, either $a_k=k$ or $a_k \geq a_{k+1} \geq \ldots \geq a_r$. In particular,
   if $a_k=0$ then $a_{k+1}=0$. \\
   If moreover the $k$--th position of the sequence has been altered by at least two reductions, then
   either $a_{k-1} > a_{k}$ or $a_{k-1}=0$.
\end{corollary}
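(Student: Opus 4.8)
The plan is to prove the first assertion by induction on the number of successive reductions, keeping a slightly sharper bookkeeping of the sequence than the statement requires, and then to deduce the second assertion from a monotonicity property of consecutive differences that is essentially a reformulation of Lemma~\ref{lem:reddown}.

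\emph{First part.} I would begin with the structural observation that a \emph{successful} $m$--reduction alters exactly the last $m$ entries of the sequence to which it is applied, each of them strictly decreasing (every reducer $r_j$ is at least $1$) and staying nonnegative; since passing to the input of the next reduction only deletes trailing zeros, the set of positions ever altered is, at every stage, a final segment $\{p,p+1,\dots,r\}$, and moreover $a_k\le k$ always, with equality exactly when position $k$ was never touched. Hence it suffices to prove, by induction on the number of reductions, the strengthened invariant that the current (zero--padded) sequence has the form
$$1,2,\dots,p-1,\ a_p,\ a_{p+1},\ \dots,\ a_r \qquad\text{with}\qquad p-1\ \ge\ a_p\ \ge\ a_{p+1}\ \ge\ \dots\ \ge\ a_r\ \ge\ 0 .$$
The base case is trivial. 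For the step, delete trailing zeros and apply a successful $m$--reduction, whose scope is a suffix $[p',n]$ of the shortened sequence; since $n\ge p-1$ (position $p-1$ is nonzero), the new altered segment $[p,r]\cup[p',n]=[\min(p,p'),r]$ is again a suffix. Monotonicity of the new tail is then checked pair by pair via Lemma~\ref{lem:reddown}: by the inductive hypothesis the old sequence restricted to the scope is a weakly increasing stretch $j\mapsto j$ followed by a non-increasing stretch, so every old consecutive difference inside the scope is $\ge -1$; Lemma~\ref{lem:reddown} forces each new consecutive difference to be $\ge 0$ unless the new entry is $0$, and one verifies that the value $0$, once attained, propagates to the right (again by Lemma~\ref{lem:reddown}, with the same bound). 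The left boundary of the scope is handled directly from the inductive hypothesis together with $a_p<p$. The ``in particular'' clause follows at once: $a_k=0$ forces $a_k\ne k$, hence $0=a_k\ge a_{k+1}\ge 0$.

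\emph{Second part.} Here I would track the single difference $g_k:=a_{k-1}-a_k$ through the whole process. If $a_k=0$ the claim is trivial (either $a_{k-1}=0$, or $a_{k-1}\ge 1>0=a_k$), so assume the final value of position $k$ is positive. Since the value at position $k$ strictly decreases whenever it is altered, it stays positive at every stage; hence position $k$ --- and with it position $k-1$ --- is never a deleted trailing zero, so both remain in the sequence throughout. Consequently, in a single reduction: if position $k$ is untouched then so is $k-1$ (the scope is a suffix, so containing $k-1$ would force containing $k$), and $g_k$ is unchanged; if position $k$ is the leftmost entry of the scope, then $a_{k-1}$ is untouched while $a_k$ strictly decreases, so $g_k$ strictly increases; and if $k-1$ and $k$ both lie in the scope, then --- the new value at $k$ being positive --- Lemma~\ref{lem:reddown} says precisely that $g_k$ strictly increases. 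Starting from $g_k=-1$ in $(1,2,\dots,r)$, we conclude that $g_k$ is non-decreasing and gains at least $1$ for every reduction altering position $k$; so if at least two reductions alter it, $g_k\ge -1+2=1$, i.e. $a_{k-1}>a_k$.

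\emph{Main obstacle.} The delicate point is the inductive step of the first part: one must track precisely which entries each reduction alters --- a suffix of the \emph{current}, possibly already shortened, sequence --- and the fact that below the current scope the sequence is still the identity $j\mapsto j$, since that is exactly what feeds Lemma~\ref{lem:reddown} the bound ``old difference $\ge -1$''. The bookkeeping around trailing zeros, needed to justify both this bound and the rightward propagation of the value $0$, is where the argument requires the most care; everything else reduces to direct applications of Lemma~\ref{lem:reddown}.
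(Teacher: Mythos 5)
Your proof is correct and takes essentially the same route as the paper's: both rest on Lemma \ref{lem:reddown} combined with the fact that a reduction only alters a suffix of the current sequence, so consecutive differences of altered entries start at $-1$ and can only increase (or the right entry becomes $0$). Your write-up simply supplies the details the paper leaves implicit --- the induction with the non-increasing-tail invariant, the bookkeeping of trailing zeros, and the explicit tracking of $a_{k-1}-a_k$ gaining at least $1$ per alteration for the last claim, which the paper dismisses as ``straightforward''.
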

\proof
   Let us assume that $a_k < k$. It follows that the element on the $k$--th
   position has been altered in the course of at least one reduction.
   Since reductions work from the
   right to the left, also each element on the $\ell$-th position, for $\ell > k$, has
   been altered. Therefore it is enough to prove that if $a_k$ has
   been altered by a reduction, then $a_k \geq a_{k+1}$. This follows immediately from Lemma \ref{lem:reddown}.
   The last property follows straightforward.
\endproof
   The next Lemma describes numerical properties
   of sequences which are not $m$--reducible. It is going
   to be important in the proof of Theorem \ref{marcin-kryterium}.
\begin{lemma}[A consequence of non $m$--reducibility]\label{lem:redfails}
   Let $(b_1,\dots,b_r)$ be a sequence of integers
   which is not $m$--reducible. Then one of the following two conditions is satisfied:
   \begin{itemize}
      \item[1).] $r < m$ (the sequence is too short) or;
      \item[2).] there exist $k,\ell$ such that $r-m+1 \leq k <\ell \leq r$ and $b_{k} \leq b_{\ell}$, $b_{k} < m$ (the sequence has a too flat tail).
   \end{itemize}
\end{lemma}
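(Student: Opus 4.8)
The plan is to trace through the Reduction Algorithm, paying attention to exactly which line forces the \textbf{stop}. Since the algorithm exits via \textbf{stop} precisely when $Z_{k-1}=Z_k$ for some $k$, and this happens exactly when $r_k=z_k$ but $z_k\notin Z_k$ — no, more carefully: $Z_{k-1}=Z_k\setminus\{r_k\}$, so $Z_{k-1}=Z_k$ can only happen if $r_k\notin Z_k$; but $z_k=\max(Z_k)$ and $r_k$ is either $a_k$ (when $a_k<m$ and $a_k\le z_k$) or $z_k$. If $r_k=z_k$ then $r_k\in Z_k$ and the set strictly shrinks, so the only way to get $Z_{k-1}=Z_k$ is $r_k=a_k$ with $a_k\notin Z_k$. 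So the failure occurs at some index $k$ (with $r-m+1\le k\le r$, writing the relevant suffix of length $m$) where $a_k<m$, $a_k\le z_k$, yet $a_k\notin Z_k$. First I would establish that if $r<m$ the sequence is too short and we are in case 1); so assume $r\ge m$ and apply the algorithm to the length-$m$ suffix $(a_1,\dots,a_m)$ in the Lemma's notation (i.e. the $b_i$ with indices $r-m+1,\dots,r$).

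Next I would analyze the set $Z_k$ at the failure step. We have $Z_m=\{1,\dots,m\}$ and at each earlier step we removed one reducer $r_j$ for $j=m,m-1,\dots,k+1$. So $Z_k=\{1,\dots,m\}\setminus\{r_m,r_{m-1},\dots,r_{k+1}\}$, a set of size $k$. The condition $a_k\notin Z_k$ together with $a_k<m$ and $a_k\le z_k$ means $a_k$ equals one of the previously-used reducers: $a_k=r_\ell$ for some $\ell$ with $k<\ell\le m$. Now I invoke the structure of the reducers: each $r_\ell$ was chosen as either $a_\ell$ or $z_\ell=\max(Z_\ell)$, and in the latter case $r_\ell\ge$ everything still available, in particular $r_\ell\ge a_k$ would need checking. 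The key point to extract is: $a_k=r_\ell\le a_\ell$. Indeed, if $r_\ell=a_\ell$ this is an equality; if $r_\ell=z_\ell$, then the \textbf{if} branch was not taken, meaning $a_\ell\ge m>a_k$ or $a_\ell>z_\ell=r_\ell=a_k$, so in all cases $a_k\le a_\ell$. Translating back to the $b$-notation, setting the two positions to be the global indices corresponding to $k$ and $\ell$, we get two indices $k'<\ell'$ in $\{r-m+1,\dots,r\}$ with $b_{k'}\le b_{\ell'}$ and $b_{k'}=a_k<m$, which is exactly conclusion 2).

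The main obstacle I anticipate is the bookkeeping in the case $r_\ell=z_\ell$: I must rule out the possibility that the \textbf{if}-test "$a_\ell<m$ \textbf{and} $a_\ell\le z_\ell$" failed only because $a_\ell\le z_\ell$ was violated while $a_\ell<m$ still held — in that subcase $a_\ell>z_\ell=r_\ell=a_k$, so we still get $a_k<a_\ell$, fine — versus the subcase $a_\ell\ge m$, giving $a_\ell\ge m>a_k$. Both subcases yield $a_k\le a_\ell$, so there is really no obstruction, just a small case split. A secondary subtlety: one must make sure the index $\ell$ with $a_k=r_\ell$ actually lies strictly to the right of $k$ (it does, since only reducers $r_m,\dots,r_{k+1}$ have been removed from $Z_k$) and within the window, and that the inequality $b_{k'}<m$ is the one the statement wants (it is, coming directly from the \textbf{if}-condition $a_k<m$). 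With these observations assembled, the proof is complete; I would write it as a direct analysis of the stopping step rather than by contradiction.
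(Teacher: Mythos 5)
Your proposal is correct and follows essentially the same route as the paper's proof: identify that a \textbf{stop} forces $r_{k}=a_{k}$ with $a_{k}<m$ and $a_{k}\notin Z_{k}$, so this value coincides with a reducer already used at some later-indexed step $\ell>k$, which yields the pair of positions in condition 2). You even spell out the small case split showing $r_\ell\le a_\ell$ (hence $b_k\le b_\ell$), a detail the paper leaves implicit.
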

\begin{proof}
   The first property is obvious, so we assume that $r \geq m$.
   We adjust the notation to match the Reduction Algorithm and write
   $$(b_1,\dots,b_r)=(b_1,\dots,b_{r-m},a_1,\dots,a_m)$$
   with $a_i=b_{r-m+i}$ for $i=1,\dots,m$.
   Let $r_{k'+1},\dots,r_m$ be the sequence of reducers and let
   $c_{k'+1},\dots,c_m$ be the sequence resulting
   from the Reduction Algorithm right before the moment it stopped
   (i.e. we assume that it stopped for $k'$). The algorithm stops
   if $Z_{k'-1}=Z_{k'}$. This means that $r_{k'}$ was not an
   element of $Z_{k'}$, so that in particular this reducer has
   been used in a previous step, say with index $\ell'>k'$.
   Moreover this implies that $r_{k'}\neq z_{k'}$, which
   going back one line in the algorithm implies that
   $r_{k'}=a_{k'}$ and consequently $a_{k'}<m$.
   Thus the claim follows with $k=r-m+k'$ and $\ell=r-m+\ell'$.
\end{proof}

\section{A vanishing theorem}
   We begin by recalling from \cite{DumJar07} that on $\P^n$ one can consider slightly more general linear
   series than of the form $\call_n(t;m_1,\dots,m_s)$. Specifically, let $\call_{G,n}(m_1,\dots,m_s)$
   be the vector space spanned by all monomials in the ideal generated by monomials
   with exponents in a fixed set $G\subset \N^n$. We suppress the index $n$ if the dimension
   of the ambient space is understood. Thus for example for $n=2$ we have
   $$\call(t;m_1,\dots,m_s)=\call_D(m_1,\dots,m_s),$$
   where $D = \{ (x,y) \in \N^2 : x+y \leq t \}$. We extend the notions
   of non-speciality and $h^1$--regularity to linear systems of that kind, see \cite[Definition 5]{DumJar07}.
\begin{definition}[Non-speciality and $h^1$--regularity]\label{def:non spec and h1}
   We say that the linear system $\call_G(m_1,\dots,m_s)$ is non-special
   if its dimension agrees with the virtual dimension.\\
   We say that the system $\call_G(m_1,\dots,m_s)$ is $h^1$--regular, if
   it is non-special and effective.
\end{definition}
\begin{theorem}\label{marcin-witek}
   Fix positive integers $d,m_1,\dots,m_s$ and let $S_1=(1,2,\dots,d,d+1)$. Assume inductively that
   $S_j$ is $m_j$-reducible with the reduction equal to $S_{j+1}$, $j=1,\dots,s$. If $\size(S_{s+1})>0$ is positive, then
   the linear system $\call(d;m_1,\dots,m_s)$ is $h^1$-regular, assuming that the multiplicities
   are imposed in \emph{general} points.
\end{theorem}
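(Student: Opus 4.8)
The plan is to argue by induction on $s$, using the reduction algorithm as a bookkeeping device that translates a single reduction step into a cohomological statement about linear systems. The base case will be $s=0$: the system $\call(d)$ (no fat points) is just the complete linear system of plane curves of degree $d$, which is obviously $h^1$-regular, and $S_1=(1,2,\dots,d,d+1)$ has size $\binom{d+2}{2}>0$. The inductive step is where the work lies: I want to show that if $S_1$ is $m_1$-reducible with reduction $S_2$, and the ``shortened'' data $(d';m_2,\dots,m_s)$ corresponding to $S_2$ defines an $h^1$-regular system (by the inductive hypothesis applied to $s-1$ multiplicities), then $\call(d;m_1,\dots,m_s)$ is itself $h^1$-regular.

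The key step is to identify the right geometric operation corresponding to one pass of the Reduction Algorithm. The classical tool here is a degeneration / specialization of the plane: one degenerates $\P^2$ to a union of $\P^2$ and a Hirzebruch surface (or, in the toric language of \cite{DumJar07}, one cuts the Newton polytope $D=\{x+y\le d\}$ by a line and distributes the multiplicities and the monomial exponents on the two sides). The reducers $r_1,\dots,r_{m_1}$ produced by the algorithm are exactly a prescription for which ``diagonal layers'' of the triangle are consumed by the last point $P_s$ of multiplicity $m_1$ — wait: note the multiplicities are indexed so that $m_1$ is largest, and the algorithm strips $m_1$ first, so $P_s$ should be read as ``the point handled in the first reduction.'' After this cut, the curve of degree $d$ through the fat points restricts, on the ``cut off'' component, to a curve that is forced (because $t\ge$ the sum of the two top multiplicities in the right regime, which is what guarantees reducibility rather than a premature \textbf{stop}) to contain a fixed reducible curve — a union of $m_1$ lines through $P_s$ of the appropriate degrees — and what remains on the main component is precisely the system $\call(d';m_2,\dots,m_s)$ with Newton data $S_2$. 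One then uses the standard Horace-type semicontinuity: if the two restricted systems (the fixed part on the degenerate component and $\call(d';m_2,\dots,m_s)$ on the main component) are both non-special and their virtual dimensions add up correctly — which is exactly the content of the size identity $\size(S_2)=\size(S_1)-\binom{m_1+1}{2}$ — then the system on the general fibre is non-special too. Effectivity is handed to us by the hypothesis $\size(S_{s+1})>0$ together with monotonicity of sizes along the reduction chain.

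Concretely the steps are: (1) set up the notation relating a sequence $S_j=(s_1^{(j)},s_2^{(j)},\dots)$ to a linear system $\call_{G_j}(m_j,\dots,m_s)$, where $G_j$ is the staircase region whose diagonal-layer sizes are the entries of $S_j$ — for $S_1$ this is the full triangle of degree $d$; (2) show that $m_1$-reducibility of $S_j$ is equivalent to the existence of the appropriate specialization in which the top fat point absorbs a fixed subcurve whose residual is governed by $S_{j+1}$; (3) invoke Lemma \ref{lem:redfails} in the contrapositive direction — when reducibility \emph{succeeds}, none of the obstructions (1) or (2) of that lemma occurs, so the tail of the sequence is steep enough that the reducers are genuinely the maximal available ones and the subcurve really is forced; (4) apply semicontinuity of $h^1$ in a flat family to conclude non-speciality of $\call(d;m_1,\dots,m_s)$ from that of $\call(d';m_2,\dots,m_s)$; (5) conclude effectivity from $\size(S_{s+1})>0$.

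The main obstacle I anticipate is step (2): making precise the dictionary between the combinatorial reduction and an actual reducible-curve / polytope-subdivision statement, and in particular checking that the reducers $r_k=z_k$ versus $r_k=a_k$ dichotomy in the algorithm corresponds exactly to ``the $k$-th diagonal layer is fully absorbed by $P_s$'' versus ``$P_s$ only partially touches that layer, leaving $c_k$ free monomials.'' Getting the inequalities right so that the virtual dimensions add (not just are bounded) — i.e. that the specialization is Horace-balanced rather than lossy — is the delicate point, and it is precisely here that the condition $\size(S_{s+1})>0$ (as opposed to $\ge 0$) and the structure of Lemma \ref{lem:redfails} must be used. Once the dictionary is in place, steps (1), (4) and (5) are routine applications of semicontinuity and the size identity; the induction then runs with no further surprises.
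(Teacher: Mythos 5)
Your overall induction scheme (one point per reduction step, the size identity accounting for the $\binom{m_j+1}{2}$ conditions, effectivity from $\size(S_{s+1})>0$) matches the paper's, but the paper's proof consists of exactly one nontrivial ingredient, and your proposal does not supply it: the statement that if $S_j$ is $m_j$-reducible to $S_{j+1}$ and $\call_{S_{j+1}}(m_{j+1},\dots,m_s)$ is non-special, then $\call_{S_j}(m_j,\dots,m_s)$ is non-special. The paper simply quotes this as \cite[Corollary 19]{DumJar07} (together with \cite[Definition 11]{DumJar07} identifying $S_1$ with the triangle $D$), and the whole theorem is then a backwards induction from the empty system. Your step (2) --- the ``dictionary'' between one pass of the Reduction Algorithm and a Horace-type specialization in which the fat point absorbs a forced subcurve --- is precisely this cited result, and you explicitly flag it as the unresolved main obstacle. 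As written, the proposal is therefore a plan for re-proving the Dumnicki--Jarnicki reduction lemma rather than a proof of Theorem \ref{marcin-witek}; without that lemma (proved or cited) the argument does not close.

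Beyond the missing core, several details of the sketch are off. You invoke ``$t\ge$ the sum of the two top multiplicities'' as what ``guarantees reducibility,'' but no such hypothesis appears in Theorem \ref{marcin-witek}; reducibility of each $S_j$ is an assumption of the statement (the inequality $d\ge m_1+m_2$ belongs to Theorem \ref{marcin-kryterium}, where one must \emph{verify} reducibility). The proposed geometric picture --- the point absorbing a union of $m_1$ lines of prescribed degrees --- does not match the combinatorics of the algorithm: the reducers $r_k$ are counts of monomials removed from diagonal layers, they are not always the maximal available ones (the branch $r_k=a_k$), and the reduced region $S_{j+1}$ is in general a staircase, not a smaller triangle, so the residual system is a monomial system $\call_{S_{j+1}}(\cdot)$ rather than some $\call(d';\cdot)$; this is exactly why the paper works with the generalized systems $\call_G$. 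Finally, Lemma \ref{lem:redfails} describes what happens when a reduction \emph{fails}; its contrapositive gives no quantitative information usable in your step (3), and it plays no role in the paper's proof of this theorem (it is used only in Theorem \ref{marcin-kryterium} to show the reductions are possible). If you want a self-contained argument, the work to be done is the content of \cite[Corollary 19]{DumJar07} itself, which is established there by a linear-algebra/specialization analysis of the conditions matrix for monomial systems, not by the reducible-surface degeneration you outline.
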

\begin{proof}
   With notation recalled above, the result follows from \cite{DumJar07}.
   By \cite[Definition 11]{DumJar07} we can write $D=S_1$. Since $S_j$ is $m_j$-reducible to $S_{j+1}$, by
   \cite[Corollary 19]{DumJar07} we know that if $\call_{S_{j+1}}(m_{j+1},\dots,m_s)$ is non-special, then
   $\call_{S_{j}}(m_j,m_{j+1},\dots,m_s)$ is also non-special. We start with the trivial case of $\call_{S_{s+1}}()$
   (with no multiplicities imposed), which is trivially non-special, and go back inductively to obtain non-speciality
   of $\call_{S_{1}}(m_1,\dots,m_s)$. Observe that non-emptiness of $S_{s+1}$ gives non-emptiness of $\call_{S_{1}}(m_1,\dots,m_s)$.
\end{proof}
   The above Theorem whereas powerful in very concrete questions, is not easily applicable
   under general assumptions. We present below its modification which is well suited for
   the proof of Theorem A.
\begin{theorem}\label{marcin-kryterium}
   Let $s,d,m_1,\dots,m_s$ be positive integers, with $s\geq 4$.
   Assume that the multiplicities are ordered
   $m_1 \geq m_2 \geq m_3\geq \ldots \geq m_s$.
   If $d \geq m_1+m_2$ and
   $$\binom{d+2}{2} - \sum_{j=1}^{s} \binom{m_j+1}{2} \geq (2m_4-1)(m_4-1) + 1 - \frac{m_4(m_4+1)}{2}
      =\frac32m_4^2-\frac72m_4+2,$$
   then $\sys(d;m_1,\dots,m_s)$ is non-special and $h^1$-regular whenever the left-hand side is non-negative,
   in particular for $m_4\geq 2$.
\end{theorem}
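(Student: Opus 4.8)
The plan is to derive the conclusion from Theorem~\ref{marcin-witek}. Put $S_1=(1,2,\dots,d,d+1)$ and run the Reduction Algorithm repeatedly, using the parameter $m_j$ at the $j$-th step and discarding trailing zeros after each step so that the next input is again a sequence of positive integers. Two things have to be checked: that all $s$ reductions succeed, so that $S_2,\dots,S_{s+1}$ are defined, and that $\size(S_{s+1})>0$. Granting these, Theorem~\ref{marcin-witek} shows at once that $\sys(d;m_1,\dots,m_s)$ is $h^1$-regular, in particular non-special.

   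The size bookkeeping is the easy half. A successful $m_j$-reduction lowers the size by $\binom{m_j+1}{2}=\tfrac{m_j(m_j+1)}{2}$, so
   $$\size(S_{s+1})=\binom{d+2}{2}-\sum_{j=1}^{s}\binom{m_j+1}{2},$$
   which is exactly the left-hand side of the displayed inequality in the statement. By hypothesis this is at least $(2m_4-1)(m_4-1)+1-\tfrac{m_4(m_4+1)}{2}=\tfrac32 m_4^2-\tfrac72 m_4+2$, and since $3m_4^2-7m_4+2=(3m_4-1)(m_4-2)\ge0$ for $m_4\ge2$, this lower bound is already $\ge1$ in that range, so $\size(S_{s+1})>0$. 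For $m_4=1$ the lower bound is only $0$; when $\size(S_{s+1})>0$ we still conclude as above, while the degenerate case $\size(S_{s+1})=0$ has virtual dimension $-1$, so that non-speciality reduces to emptiness of the system and is settled by a short separate argument. From now on we assume $\size(S_{s+1})>0$.

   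It remains to prove that every one of the $s$ reductions succeeds. By Lemma~\ref{lem:redfails}, if the $j$-th reduction fails then $S_j$ is either too short (of length below $m_j$) or has a flat tail: some indices $k<\ell$ among its last $m_j$ entries satisfy $b_k\le b_\ell$ and $b_k<m_j$. For $j=1$ and $j=2$ one checks directly, using $d\ge m_1+m_2$, that at every step of the $m_1$- and the $m_2$-reduction the chosen reducer $r_k$ equals $\max(Z_k)$; this produces the explicit shape
   $$S_2=\bigl(1,2,\dots,d-m_1,\underbrace{\,d{+}1{-}m_1,\ \dots\ ,d{+}1{-}m_1\,}_{m_1+1}\bigr)$$
   and, analogously, $S_3$ arises from $S_2$ by replacing its last $m_2$ entries with the strictly decreasing run $d-m_1,\,d-m_1-1,\,\dots,\,d+1-m_1-m_2$. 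In particular $S_1$, $S_2$, $S_3$ all have length $d+1$ with positive final entry (this is where $d\ge m_1+m_2$ is used, $S_3$ ending in $d+1-m_1-m_2\ge1$), so no zeros have been discarded yet; since the last $m_3$ entries of $S_3$ form a strictly decreasing run, and $d+1>m_1\ge m_3$, Lemma~\ref{lem:redfails} shows the $m_3$-reduction succeeds as well.

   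Finally, let $j\ge4$ be the least index for which the $j$-th reduction fails, and suppose for the moment that the length of $S_j$ is at least $2m_j$. Then, because $m_j\le m_4\le m_3\le m_2\le m_1$, each of the last $m_j$ entries of $S_j$ was acted on by the reductions with parameters $m_1$ and $m_2$, hence altered at least twice. By Corollary~\ref{cor:reddown} the entries of $S_j$ lying to the right of an altered position are non-increasing, and by the last clause of that corollary a position altered at least twice cannot carry the same positive value as its left neighbour; hence along the last $m_j$ entries the values strictly decrease until they reach $0$, so $S_j$ has no flat tail. As its length also exceeds $m_j$, Lemma~\ref{lem:redfails} shows the $j$-th reduction succeeds, a contradiction. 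It therefore remains to establish, for all $j$, that the length of $S_j$ is at least $2m_j$; this amounts to controlling how many trailing zeros the earlier reductions have discarded, and it is here that the hypothesis enters, through the bound $\size(S_{s+1})\ge\tfrac32 m_4^2-\tfrac72 m_4+2$. I expect this last point --- bounding the lengths of the $S_j$, especially when $d$ exceeds $m_1+m_2$ only slightly so that the $m_3$-reduction and later ones create long strings of zeros --- to be the main technical obstacle, and the presence of $m_4$ in the statement is precisely what is needed to overcome it.
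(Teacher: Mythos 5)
Your setup (running the Reduction Algorithm on $(1,\dots,d+1)$, invoking Theorem~\ref{marcin-witek}, controlling failures via Lemma~\ref{lem:redfails}, and the explicit treatment of the first three reductions using $d\ge m_1+m_2$) matches the paper, but the heart of the argument --- why the reductions with $j\ge 4$ succeed under the stated size hypothesis --- is missing, and the partial argument you give for it does not hold. You claim that if $S_j$ has length at least $2m_j$ then each of its last $m_j$ entries was acted on by the $m_1$- and $m_2$-reductions, hence altered at least twice, so that Corollary~\ref{cor:reddown} forces a strictly decreasing tail. This is only true as long as no trailing zeros have been discarded: the $m_1$- and $m_2$-reductions touch only the last $m_1$, respectively $m_2$, absolute positions of the original length-$(d+1)$ sequence, and once the zeros produced by the $m_3$- and later reductions are removed (exactly the regime $d$ close to $m_1+m_2$ that you yourself flag as dangerous) the tail of $S_j$ recedes into positions that were altered only once, by some later $m_i$-reduction, or not at all. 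Entries altered exactly once are precisely the delicate case: the paper's proof needs its Claim~1 (such a tail entry cannot have been touched by the first three reductions, by $d\ge m_1+m_2$), the resulting bound $a_\ell\ge\ell-m_4$, and its Claim~2 limiting how often the value $m_4-1$ can repeat, in order to conclude that a failed reduction forces the whole current sequence to be dominated by $(1,\dots,2m_4-2)$ or $(1,\dots,2m_4-3,m_4-1,m_4-1)$, hence to have size at most $(2m_4-1)(m_4-1)$. Your shortcut does not substitute for this analysis.

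Moreover, the step you defer (``establish that the length of $S_j$ is at least $2m_j$'') cannot be extracted from the hypothesis in the form you want: a size bound $\size\ge(2m_4-1)(m_4-1)+1$ does not imply length $\ge 2m_4$ (for instance $(1,2,\dots,2m_4-1)$ has size $2m_4^2-m_4$, above the threshold, but length $2m_4-1$), so length is the wrong invariant; the paper works with size and domination instead, showing ``size above threshold implies $p$-reducibility for every $p\le m_4$''. Finally, you reduce in the natural order $m_1,\dots,m_s$, whereas the paper reduces in the order $m_1,m_2,m_3,m_5,\dots,m_s,m_4$, saving $m_4$ for last; this reordering is what makes the stated threshold, namely $(2m_4-1)(m_4-1)+1-\tfrac{m_4(m_4+1)}{2}$ rather than $(2m_4-1)(m_4-1)+1$, exactly sufficient: the size just before the final ($m_4$-) reduction is $\vdim+1+\tfrac{m_4(m_4+1)}{2}$ and all earlier sizes are larger, while with your ordering the size before the last reduction is only $\vdim+1+\tfrac{m_s(m_s+1)}{2}$, which the hypothesis need not push above the threshold. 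So both the key combinatorial step (failure forces small size) and the ordering trick that matches the numerics of the statement are absent; as written the proof has a genuine gap.
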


\begin{proof}
   The key idea of the proof is to apply the Reduction Algorithm $s$ times with $m$ equal to
   $$m_1,m_2,m_3,m_5,\dots,m_s,m_4.$$

   We begin by showing that the first three reductions are always possible.
   Indeed,
   the sequence
   $$(1,2,...,d+1)\; \mbox{ is }\; m_1-\mbox{reducible}$$
   since $m_1\leq d$. After $m_1$--reduction we obtain the sequence
   $$(1,2,...,d+1-m_1,d+1-m_1,d+1-m_1,...,d+1-m_1)$$
   in which the terms $d+1-m_1$ appear $m_1+1$ times.

   The $m_2$--reduction works for this sequence because
   we have inequalities $m_2\leq m_1$ and $d\geq m_1+m_2$.
   We obtain the sequence
   \begin{equation}\label{eq:red seq after m2}
      (1,2,...,d+1-m_1,...,d+3-m_1-m_2,d+2-m_1-m_2,d+1-m_1-m_2)
   \end{equation}
   where $d+1-m_1$ appears at  least once.

   Again, since $d\geq m_1+m_2$, the last element in the reduced sequence \eqnref{eq:red seq after m2}
   is at least $1$. Since $m_3\leq m_2$
   the $m_3$--reduction is possible on the sequence \eqnref{eq:red seq after m2}.
   Indeed, no situation in Lemma \ref{lem:redfails} can occur.
   After this reduction step, some elements in the sequence could be $0$ and
   they are removed from the tail.

   If $m_4=1$, then also $m_5=\dots=m_s=1$ and we are done because the
   Reduction Algorithm always works for multiplicity $1$. So we may assume
   from now on that $m_4\geq 2$.

   Let us assume that for some $j>3$ the $m_j$--reduction failed
   (and that all previous reductions were possible). Let
   $(a_1,..,a_r)$ denote the state of the sequence before the $m_j$--reduction.
   By Lemma \ref{lem:redfails} we have two possibilities.

\textbf{Case 1.} The sequence $(a_1,\dots,a_r)$ is too short to be reduced i.e. $r<m_j\leq m_4$.
   Thus the sequence $(a_1,\dots,a_r)$ is dominated by $(1,2,\dots,m_4-1)$ with
   $\size(1,2,\dots,m_4-1)=\frac{(m_4-1)m_4}{2}$. A contradiction.

\textbf{Case 2.} Assume that the sequence is $(a_1,\dots,a_k,\dots,a_{\ell},\dots,a_r)$, with
   \begin{equation}\label{eq:case 3}
   a_k \leq a_{\ell}, \; a_{k} < m_j \; \mbox{ and }\; r-m_j+1\leq k.
   \end{equation}

   We have now again two possibilities:

\textbf{Subcase 2.1.} Assume that $a_k=k$. Then $k=a_k<m_j \leq m_4$,
   and taking into account \eqnref{eq:case 3} we have $r \leq 2m_4-2$. Thus the
   sequence $(a_1,\dots,a_r)$ is dominated by $(1,\dots,2m_4-2)$ and
   \begin{equation}\label{eq:case 3a}
   \size(1,2,...,2m_4-2)=(2m_4-1)(m_4-1).
   \end{equation}
   A contradiction.

\textbf{Subcase 2.2.} Assume that $a_k<k$. This is the most tricky situation.
   By Corollary \ref{cor:reddown}
   $a_k \geq a_{k+1} \geq a_{k+2} \geq \ldots \geq a_{\ell} \geq \ldots \geq a_r$.
   Since $a_k \leq a_{\ell}$, we have $a_k=a_{k+1}=\ldots=a_{\ell}$.

   Assume for a moment that
   \begin{equation}\label{eq:ass l 2m4}
      \ell \geq 2m_4.
   \end{equation}
   We know that the $\ell$-th entry had been reduced at
   least once. But if it had been reduced twice or more, Corollary \ref{cor:reddown} would imply
   that $a_{\ell-1}> a_{\ell}$, which is impossible. So it was reduced exactly once.

   Now we claim the following:
\\
   \textit{Claim 1. ${\ell}$-th entry was \textbf{not} reduced by the first three reductions.}
\\
   Indeed, after the $m_1$--reduction it would become $a_{\ell} = d+1-m_1$.
   But, we have $a_{\ell}<m_j\leq m_2$, which contradicts our assumption that $d \geq m_1+m_2$.

   Reductions with $m_2$ and $m_3$ are also excluded because these reductions
   work only on elements reduced in the previous step (i.e. either by the $m_1$-reduction
   or by the $m_2$-reduction). But then $a_{\ell}$ would be reduced at least twice.
   This contradiction justifies Claim 1.

   Thus,  the $\ell$-th entry was reduced by some $m_i$--reduction with $i > 3.$
   Then the assumption \eqnref{eq:ass l 2m4} gives $a_{\ell} \geq 2m_4-m_j\geq m_4$.
   This is a contradiction again, as the assumptions of Case 2 imply that $m_4>a_{\ell}$.

   This way we have proved that  $\ell \leq 2m_4-1$. Hence $k-1 \leq 2m_4-3$ and our sequence is dominated
   by $(1,2,\dots,2m_4-3,m_4-1,\dots,m_4-1)$ (with $m_4-1$ appearing at most $m_4$ times). In fact
   it appears only twice, this is our next Claim.
\\
   \textit{Claim 2. In the sequence $(1,2,\dots,2m_4-3,m_4-1,\dots,m_4-1)$,
   the $m_4-1$ term appears at most two times.}
\\
   Assume that $a_r \neq 0$ (otherwise we operate on a shorter sequence). It follows that $a_j \neq
   0$ for $j=1,\dots,r$. The sequence $(a_1,\dots,a_k,\dots,a_\ell,\dots,a_r)$ results from a
   $m_{j-1}$--reduction of the sequence $(b_1,\dots,b_k,\dots,b_\ell,\dots,b_r,b_{r+1},\dots)$
   for some $j\geq 6$.

   We know that the $\ell$-th element was reduced exactly once
   (otherwise we would have $a_k>a_\ell$), hence $b_\ell=\ell$. Let
   $\tilde{k}:=r-\ell$. Observe that $b_r$ has been reduced to a
   non-zero $a_r$. This is possible only when the reducer for $b_r$
   has been chosen as a maximum of non-used reducers. Hence either
   $m_{j-1}$, as a maximal reducer, has been chosen before, or it is chosen in the $(m_{j-1})$--reduction
   to reduce $b_r$. It follows that the maximal reducer
   for $b_{r-1}$ is $m_{j-1}-1$. Inductively, the maximal reducer
   for $b_{\ell}=b_{r-\tilde{k}}$ is $m_{j-1}-\tilde{k}$, but this implies
   that $b_{\ell} \leq a_{\ell}+(m_{j-1}-\tilde{k})$, so we have the
   following sequence of inequalities:
   $$\ell = b_{\ell} \leq a_{\ell}+m_{j-1}-\tilde{k} \leq m_4-1+m_4-\tilde{k}=2m_4-1+\ell-r,$$
   which gives $r \leq 2m_4-1$. This proves Claim 2.

   That claim, together with the previous bound for $(a_1,\dots,a_r)$, gives that $(a_1,\dots,a_r)$ is
   dominated by $(1,2,\dots,2m_4-3,m_4-1,m_4-1)$ whose size is $(2m_4-1)(m_4-1)$.

\bigskip

   Concluding, we see that if the size of some sequence obtained during reducing is at least
   $(2m_4-1)(m_4-1) + 1$ then it is $p$--reducible for each $p \leq m_4$.

   The size of our sequence $(1,\dots,d+1)$, at the beginning, is $(d+1)(d+2)/2$. After the $j$-th
   reduction it is (remember that we reduce using $m_1$, $m_2$, $m_3$, $m_5, \dots$ in this order)
   $$\zeta(j) := \frac{(d+1)(d+2)}{2} - \frac{m_1(m_1+1)}{2} - \frac{m_2(m_2+1)}{2} - \frac{m_3(m_3+1)}{2} - \sum_{k=5}^{j+1} \frac{m_k(m_k+1)}{2}.$$

   As long as $\zeta(j) \geq (2m_4-1)(m_4-1)+1$ holds, the reduction is possible.
   Note that $\zeta(s-1) = \vdim \sys(d;m_1,\dots,m_s) + 1 + m_4(m_4+1)/2$.
   If $\zeta(s-1)$ is greater or equal to $(2m_4-1)(m_4-1)+1$, then all reductions are possible.
\end{proof}
   An immediate corollary useful for the proof of Theorem B is the following.
\begin{corollary}
   Keeping the notation from Theorem \ref{marcin-kryterium}, if the system in the Theorem is $h^1$--regular, then
   the Castelnuovo-Mumford regularity of the ideal $I=\bigcap_{j=1}^{s} \mathfrak{m}_{p_j}^{m_j}$ is less or equal $d+1$.
\end{corollary}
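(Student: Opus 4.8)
The plan is to re-express the hypothesis cohomologically and then invoke the standard cohomological description of Castelnuovo--Mumford regularity for a zero--dimensional scheme. Write $Z\subset\P^2$ for the fat points scheme defined by $I=\bigcap_{j=1}^s\frakm_{p_j}^{m_j}$, so that $I$ is saturated, $Z$ is zero--dimensional with $\deg Z=\sum_{j=1}^s\binom{m_j+1}{2}$, and $\cali_Z$ denotes the ideal sheaf of $Z$. By Definition~\ref{def:non spec and h1} the assumption that $\sys(d;m_1,\dots,m_s)$ be $h^1$--regular means it is non--special and effective; as recalled around \eqnref{eq:h1}, non--speciality is equivalent to $h^1(\P^2,\cali_Z(d))=0$, and effectiveness together with non--speciality forces $\vdim\sys(d;m_1,\dots,m_s)\geq 0$. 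Throughout I would freely use $d\geq m_1+m_2\geq 2$, so in particular $d\geq -2$ and $h^2(\calo_{\P^2}(d))=0$.

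The first step is to promote this single vanishing to a vanishing for all larger twists. I would pick a line $H\subset\P^2$ passing through none of the points $p_1,\dots,p_s$ (possible since the ground field is infinite); then $\cali_Z\otimes\calo_H=\calo_H$, and the restriction sequence
$$0\longto\cali_Z(t-1)\longto\cali_Z(t)\longto\calo_{\P^1}(t)\longto 0,$$
combined with $h^1(\P^1,\calo_{\P^1}(t))=0$ for $t\geq -1$, shows that $H^1(\P^2,\cali_Z(t))$ is a quotient of $H^1(\P^2,\cali_Z(t-1))$ for every $t\geq 0$. Hence $h^1(\P^2,\cali_Z(d))=0$ implies $h^1(\P^2,\cali_Z(t))=0$ for all $t\geq d$.

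The second step is to read off the regularity. Since $Z$ is zero--dimensional, $H^i(\P^2,\cali_Z(t))\cong H^i(\P^2,\calo_{\P^2}(t))$ for $i=2$ and vanishes for $i\geq 3$; therefore $\cali_Z$ is $(d+1)$--regular in the sense of Castelnuovo--Mumford, since $h^1(\P^2,\cali_Z(d))=0$ and $h^2(\P^2,\cali_Z(d-1))=h^2(\P^2,\calo_{\P^2}(d-1))=0$ (using $d-1\geq -2$). Because $I$ is saturated one has $\reg(I)=\reg(\cali_Z)$: indeed $H^0_{\frakm}(I)=H^1_{\frakm}(I)=0$, $H^2_{\frakm}(I)_t\cong H^1(\P^2,\cali_Z(t))$ and $H^3_{\frakm}(I)_t\cong H^2(\P^2,\calo_{\P^2}(t))$, so the graded local cohomology of $I$ vanishes in exactly the degree range defining $(d+1)$--regularity. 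Thus $\reg(I)\leq d+1$, as claimed.

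This argument is essentially routine; the only points that deserve a precise word are the monotonicity $h^1(\P^2,\cali_Z(t))\leq h^1(\P^2,\cali_Z(t-1))$ and the identification of module regularity with sheaf regularity for the saturated ideal $I$, both of which are classical. Alternatively, I would simply cite the equivalence ``$\reg(I)\leq m$ if and only if $h^1(\P^2,\cali_Z(m-1))=0$'', valid for the ideal of any zero--dimensional subscheme of $\P^2$, and conclude immediately from $h^1(\P^2,\cali_Z(d))=0$.
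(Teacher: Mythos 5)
Your proof is correct: the paper states this corollary without any proof (treating it as immediate), and your argument is exactly the standard one implicitly intended — non-speciality gives $h^1(\P^2,\cali_Z(d))=0$, restriction to a line missing the points propagates the vanishing to higher twists, the zero-dimensionality of $Z$ kills $h^2$, and saturatedness of $I$ identifies module and sheaf regularity, yielding $\reg(I)\leq d+1$. The only superfluous ingredient is the appeal to effectivity (hence $\vdim\geq 0$), which is never used; the single vanishing $h^1(\P^2,\cali_Z(d))=0$ suffices.
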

\begin{remark}\label{rem:ex with continued}
   Continuing Example \ref{ex:with steps}, we observe that Theorem \ref{marcin-witek} implies that
   the system $\call(9;4,4,4,3,3,3,3)$ is $h^1$-regular.
   With Theorem \ref{marcin-kryterium} we can show only that
   $\call(9;4,4,4,3,3)$ is $h^1$-regular.
   In fact, from the proof of Theorem \ref{marcin-kryterium} we know that performing
   three $4$-reductions and two $3$-reductions on $(1,\dots,10)$
   is possible, even without performing them as in Example \ref{ex:with steps}.
   This example shows that in some situations Theorem \ref{marcin-witek}
   is stronger than Theorem \ref{marcin-kryterium}. However Theorem \ref{marcin-witek} is simply not
   so useful to handle general situations.
\end{remark}

\section{The containment results}
   The following result of Harbourne and Huneke
   \cite[Proposition 3.10]{HaHu} has motivated this part of the article.
\begin{theorem}[Harbourne, Huneke]\label{thm:hahu}
   Let $\cali=\frakm_{P_1}\cap\dots\frakm_{P_s}$ be an ideal
   supported on $s$ general points in $\P^2$. Then
   $$\cali^{(2r)}\subset \calm^r\cali^r$$
   for all $r\geq 1$.
\end{theorem}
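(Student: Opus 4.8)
The plan is to verify the containment $\cali^{(2r)}\subseteq\calm^r\cali^r$ one graded piece at a time, i.e. to check $[\cali^{(2r)}]_t\subseteq[\calm^r\cali^r]_t$ for every $t$, splitting the range of $t$ at two natural thresholds. First, if $D$ denotes the largest degree of a minimal generator of the \emph{ordinary} power $\cali^r$, then for $t\ge D+r$ one has $[\calm^r\cali^r]_t=[\cali^r]_t$: a monomial multiplying a minimal generator of $\cali^r$ in such a degree has degree at least $r$, hence splits off a factor of degree $r$ (which lies in $\calm^r$), the remaining factor times the generator still lying in $\cali^r$. Secondly, since $\frakm_{P_i}^{2r}\subseteq\frakm_{P_i}^{r}$ we have $\cali^{(2r)}\subseteq\cali^{(r)}$, and for a reduced point set $\cali^{(r)}=\bigcap_i\frakm_{P_i}^{r}$ is precisely the saturation of $\cali^r$; therefore $[\cali^{(2r)}]_t\subseteq[\cali^{(r)}]_t=[\cali^r]_t$ as soon as $t\ge\satdeg(\cali^r)$, in particular for $t\ge\reg(\cali^r)$. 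Since $[\cali^{(2r)}]_t=0$ for $t<\alpha(\cali^{(2r)})$, chaining these inclusions reduces the whole statement to the single numerical estimate
$$\alpha\big(\cali^{(2r)}\big)\ \ge\ \reg\big(\cali^r\big)+r\qquad(\text{for all }r\ge 1).$$

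It then remains to produce an upper bound for $\reg(\cali^r)$ and a lower bound for $\alpha(\cali^{(2r)})$ when the $P_i$ are $s$ general points of $\P^2$. For the regularity I would feed the fat points system $\call_2(d;r,\dots,r)$ into the $h^1$--regularity criterion of Section~3 (Theorem~\ref{marcin-kryterium} together with the corollary following it), which controls $\reg\big(\bigcap_i\frakm_{P_i}^{r}\big)$, and then recover what the argument actually needs for the ordinary power --- a bound on its top generating degree $D$ and on $\satdeg(\cali^r)$ --- either directly or by invoking a known estimate for the regularity of powers of ideals of general points, obtaining a bound of the expected magnitude $O(r\sqrt s)$. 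For $\alpha(\cali^{(2r)})$, the least degree of a plane curve of multiplicity $2r$ at all $s$ general points, I would again invoke a non-speciality statement for general points (Theorem~\ref{marcin-kryterium}, or the relevant SHGH-type input) to conclude that $\call_2(t;2r,\dots,2r)$ is empty whenever $\binom{t+2}{2}\le s\binom{2r+1}{2}$, forcing $\alpha(\cali^{(2r)})$ to grow like $2r\sqrt s$. The last step is the purely arithmetic comparison of these two quantities, which holds with room to spare once $s$ is bounded suitably below.

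The main obstacle is exactly that final numerical comparison, and inside it the control of the regularity of the \emph{ordinary} power $\cali^r$: unlike the symbolic power, $\cali^r$ is not directly governed by an interpolation/postulation statement, so its regularity (or at least its top generating degree and saturation degree) must be extracted from an auxiliary input on powers of ideals of general points, or by carefully relating it to the fat points regularity handled by Theorem~\ref{marcin-kryterium}. It is the delicacy of this comparison for small $s$ --- and, in the wider setting of Theorem~B, the additional work needed to leave the multiplicity-one case and treat the almost homogeneous and uniformly fat cases, cf. the more technical Theorem~\ref{zastosowanie} --- that accounts for hypotheses such as $s\ge 9$.
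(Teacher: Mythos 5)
Two remarks before the substance: the paper itself does not prove Theorem~\ref{thm:hahu} --- it is quoted from Harbourne--Huneke \cite[Proposition 3.10]{HaHu} as motivation for Section~4 --- so there is no internal proof to compare against; and your first paragraph is essentially correct and coincides with the mechanism the paper does set up, namely Lemma~\ref{lemathahu} and Corollary~\ref{cor:hahu} (modelled on \cite[Lemma 2.3]{HaHu}): the containment reduces to $\alpha(\cali^{(2r)})\geq \reg(\cali^r)+r$, and the ordinary power, which worries you, is not attacked directly but disposed of by \cite[Theorem 1.1]{GGP95}, $\satdeg(\cali^r)\leq\reg(\cali^r)\leq r\cdot\reg(\cali)$, so that only the regularity of the ideal of $s$ general \emph{simple} points (a classical quantity, of order $\sqrt{2s}$) is needed.

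The genuine gap is in your proposed source for the lower bound on $\alpha(\cali^{(2r)})$. Theorem~\ref{marcin-kryterium} cannot supply it: its hypothesis forces the virtual dimension of the system to be at least $\frac32 m_4^2-\frac72 m_4+2$, which for $m_1=\dots=m_s=2r$ equals $6r^2-7r+2>0$, so that theorem only certifies non-speciality of systems with nonnegative expected dimension and can never certify emptiness. The blanket statement you actually invoke --- that $\call_2(t;2r,\dots,2r)$ is empty whenever $\binom{t+2}{2}\leq s\binom{2r+1}{2}$, i.e.\ whenever the virtual dimension is negative --- is an SHGH/Nagata-type assertion which is open for general $s$, so it cannot be cited as ``the relevant SHGH-type input.'' What is available, and what the paper uses in the analogous arguments (Proposition~\ref{drugie}, Theorem~\ref{zastosowanie}), is Xu's lower bound on multi-point Seshadri constants \eqnref{eq:sesh bound}, $\varepsilon(\calo_{\P^2}(1);P_1,\dots,P_s)\geq \tfrac{1}{\sqrt{s+1}}$ for $s\geq 9$ general points, giving $\alpha(\cali^{(2r)})\geq \tfrac{2rs}{\sqrt{s+1}}$; one then checks numerically that $\tfrac{2s}{\sqrt{s+1}}\geq \reg(\cali)+1$. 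Finally, Theorem~\ref{thm:hahu} carries no lower bound on $s$, while both this Seshadri input and your closing phrase ``once $s$ is bounded suitably below'' require one; the few-points cases $s\leq 8$ must be treated separately (as Harbourne--Huneke do), so as written your argument does not cover the full statement.
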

   Our arguments in this part rely on the following fact modelled
   on a result by Harbourne and Huneke,
   \cite[Lemma 2.3]{HaHu}.
\begin{lemma}\label{lemathahu}
   Let $\cali$ be a homogeneous ideal in $(n+1)$ variables
   with $0$--dimensional support. Assume that
   for some non-negative integers $r$ and $k$
   \begin{equation}\label{eq:bound on alpha}
      \alpha(\cali^{(q)})\geq r\cdot\reg(\cali)+k.
   \end{equation}
   Then
   $$\cali^{(q)}\subset \calm^k\cdot \cali^r.$$
\end{lemma}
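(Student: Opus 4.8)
The plan is to follow the strategy of Harbourne and Huneke \cite[Lemma 2.3]{HaHu}. Put $R:=\field[\P^n]=\field[x_0,\dots,x_n]$ and write $R_e$ for its degree-$e$ component; set $\rho:=\reg(\cali)$. The degenerate cases ($r=0$, or $\cali\in\{0,R\}$) are immediate, so I would assume $r\geq 1$ and $\cali$ nontrivial, and it then suffices to show that an arbitrary nonzero homogeneous $F\in\cali^{(q)}$, say of degree $t$, lies in $\calm^{k}\cali^{r}$. By definition of $\alpha$ and the hypothesis one has $t\geq\alpha(\cali^{(q)})\geq r\rho+k$. The first point I would make is that the hypothesis already forces $q\geq r$: if $g\in\cali$ is nonzero of degree $\alpha(\cali)$, then $g^{q}\in\cali^{q}\subseteq\cali^{(q)}$, whence $\alpha(\cali^{(q)})\leq q\,\alpha(\cali)\leq q\rho$ (using $\alpha(\cali)\leq\reg(\cali)$, since $\cali$ is generated in degrees $\leq\rho$); comparing with $\alpha(\cali^{(q)})\geq r\rho$ and cancelling $\rho>0$ gives $q\geq r$, and hence $\cali^{(q)}\subseteq\cali^{(r)}$, so in particular $F\in\cali^{(r)}$.

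Next I would identify $\cali^{(r)}$ with the saturation $(\cali^{r})^{\mathrm{sat}}$: because $\cali$ has $0$-dimensional support, passing to $\cali^{r}$ alters the primary component at each point of the support in exactly the same way as passing to the $r$-th symbolic power, and the only additional component of $\cali^{r}$ is $\calm$-primary, which the symbolic power discards; thus $\cali^{(r)}=(\cali^{r})^{\mathrm{sat}}$ and $F\in(\cali^{r})^{\mathrm{sat}}$. Now comes the key input, which I expect to be the main obstacle to a fully self-contained argument: since $\dim R/\cali\leq 1$, one has the bound $\reg(\cali^{r})\leq r\cdot\reg(\cali)=r\rho$ (Chandler's theorem on the regularity of powers of ideals of $0$-dimensional schemes). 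Granting this, $\satdeg(\cali^{r})\leq\reg(\cali^{r})\leq r\rho\leq t$, so the inclusion $(\cali^{r})_{t}\hookrightarrow\bigl((\cali^{r})^{\mathrm{sat}}\bigr)_{t}$ is an equality, and therefore $F\in(\cali^{r})_{t}$.

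Finally I would extract the factor $\calm^{k}$. Since $\reg(\cali)=\rho$, the ideal $\cali$, and hence $\cali^{r}$, is generated in degrees $\leq r\rho$; writing a homogeneous generating set $\{g\}$ of $\cali^{r}$ with all $\deg g\leq r\rho$, we have $(\cali^{r})_{t}=\sum_{g}R_{t-\deg g}\cdot g$ because $t\geq r\rho$. For each such $g$, $t-\deg g\geq t-r\rho\geq k$, so $R_{t-\deg g}=R_{k}\cdot R_{t-\deg g-k}$ and hence $R_{t-\deg g}\cdot g\subseteq R_{k}\cdot(\cali^{r})_{t-k}=(\calm^{k}\cali^{r})_{t}$. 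Thus $F\in(\cali^{r})_{t}\subseteq\calm^{k}\cali^{r}$, and since $F$ was an arbitrary homogeneous element of $\cali^{(q)}$ this gives $\cali^{(q)}\subseteq\calm^{k}\cali^{r}$, as claimed. The only nonelementary ingredient in this plan is the regularity bound $\reg(\cali^{r})\leq r\reg(\cali)$; everything else is bookkeeping with $\alpha$, saturation, and generator degrees.
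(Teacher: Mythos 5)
Your proposal is correct and follows essentially the same route as the paper's proof: deduce $q\geq r$ from $\alpha(\cali^{(q)})\leq q\,\alpha(\cali)$, use the bound $\reg(\cali^r)\leq r\,\reg(\cali)$ (you cite Chandler, the paper cites Geramita--Gimigliano--Pitteloud; both give the same statement for ideals with $0$-dimensional support) to identify $(\cali^{(r)})_t$ with $(\cali^r)_t$ in degrees $t\geq r\,\reg(\cali)$, and then extract $\calm^k$ from the fact that the generators of $\cali^r$ have degree at most $r\,\reg(\cali)$ while $\deg F\geq r\,\reg(\cali)+k$. Your explicit identification $\cali^{(r)}=(\cali^r)^{\mathrm{sat}}$ only makes precise a step the paper leaves implicit, so there is no substantive difference.
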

\proof
   Note that since $\alpha(\cali^{(q)})\leq q\cdot \alpha(\cali)$,
   we get from \eqnref{eq:bound on alpha} that $r\leq q$, which
   in particular shows the inclusion $\cali^{(q)}\subset \cali^{(r)}$.

   For the Castelnuovo-Mumford regularity in our situation,
   we have by \cite[Theorem 1.1]{GGP95}
   $$\satdeg(\cali^r)\leq\reg(\cali^r)\leq r\cdot\reg(\cali),$$
   so that
   $$\left(\cali^{(r)}\right)_t= \left(\cali^r\right)_t\;\mbox{ for }\; t\geq r\cdot\reg(\cali).$$
   Hence
   \begin{equation}\label{eq:q in r}
      \left(\cali^{(q)}\right)_t\subset \left(\cali^r\right)_t\;\mbox{ for }\; t\geq r\cdot\reg(\cali).
   \end{equation}
   Let $h_1,\dots,h_{\ell}$ be minimal degree (i.e. $\leq r\cdot t$) generators
   of $\cali^r$. For $f\in \left(\cali^{(q)}\right)_t$ we have
   $f=0$ if $t<r\cdot\reg(\cali)+k$ by the assumption \eqnref{eq:bound on alpha}.
   On the other hand, for $t\geq r\cdot\reg(\cali)+k$
   by \eqnref{eq:q in r} there exist homogeneous polynomials $f_1,\dots,f_{\ell}$
   such that $f=\sum_{i=1}^{\ell}f_i\cdot h_i$. It follows that
   $$\deg(f_i)\geq\deg(f)-\deg(h_i)\geq k$$
   for all $i=1,\dots,\ell$, which provides the desired result.
\endproof
   As an immediate consequence of the above Lemma, we obtain the following useful criterion.
\begin{corollary}\label{cor:hahu}
   Let $\cali$ be a fat points ideal in $\P^2$.
   Assume that
   $$\alpha(\cali^{(2r)})\geq r\cdot(\reg(\cali)+1).$$
   Then
   $$\cali^{(2r)}\subset \calm^r\cali^r.$$
\end{corollary}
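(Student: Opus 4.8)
The plan is to deduce this immediately from Lemma~\ref{lemathahu}. A fat points ideal $\cali\subset\C[\P^2]=\C[x_0,x_1,x_2]$ is a homogeneous ideal in $n+1=3$ variables whose support is the finite set $\{P_1,\dots,P_s\}$, hence $0$--dimensional, so the standing hypotheses of Lemma~\ref{lemathahu} on the ambient ring and the support are automatically met.

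Next I would simply specialise the parameters in Lemma~\ref{lemathahu}: take $q=2r$ and $k=r$. With these choices the general hypothesis \eqnref{eq:bound on alpha}, namely $\alpha(\cali^{(q)})\geq r\cdot\reg(\cali)+k$, reads $\alpha(\cali^{(2r)})\geq r\cdot\reg(\cali)+r=r\bigl(\reg(\cali)+1\bigr)$, which is precisely the assumption of the Corollary. Lemma~\ref{lemathahu} then yields $\cali^{(q)}\subset\calm^{k}\cdot\cali^{r}$, i.e. $\cali^{(2r)}\subset\calm^{r}\cali^{r}$, as claimed.

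There is no genuine obstacle here: the Corollary is nothing more than the case $q=2r$, $k=r$ of the Lemma, repackaged so that it is the Castelnuovo--Mumford regularity of $\cali$ itself (rather than that of $\cali^{r}$) which one has to bound — this is the form that will interact cleanly with Theorem~\ref{marcin-kryterium} and its Corollary in the proof of Theorem~B. All the substance, namely the comparison $\alpha(\cali^{(q)})\leq q\cdot\alpha(\cali)$, the regularity bound $\reg(\cali^{r})\leq r\cdot\reg(\cali)$ from \cite{GGP95}, and the degree bookkeeping on a minimal set of generators of $\cali^{r}$, has already been carried out inside the proof of Lemma~\ref{lemathahu}.
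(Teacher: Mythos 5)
Your proposal is correct and is exactly the paper's argument: the Corollary is stated there as an immediate consequence of Lemma~\ref{lemathahu}, obtained by the same specialisation $q=2r$, $k=r$, with the fat points ideal in $\P^2$ supplying the $0$--dimensional support hypothesis. Nothing further is needed.
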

   Before proceeding, it is convenient
   to introduce the following function
   $$\rho(m):=\left\{\begin{array}{ccc}
      0 & \mbox{ if } & m=1\\
      (3m-1)(m-2) & \mbox{ if }  & m\geq 2
      \end{array}\right..
      $$
   The crucial point in the proof of Theorem B is the following criterion,
   which follows from the $h^1$--regularity statement in Theorem \ref{marcin-kryterium}.
\begin{theorem}\label{zastosowanie}
   Let $m_1\geq m_2 \geq ...\geq m_s$ be positive integers and consider the fat points ideal
   $\cali=\mathfrak{m}_{P_1}^{m_1}\cap\mathfrak{m}_{P_2}^{m_2}\cap...\cap\mathfrak{m}_{P_s}^{m_s}$ of
   $s\geq 9$ general points. If there exists an integer $d$ such that
\begin{equation}
\label{eqreg} d(d+3)\geq \sum_{i=1}^s m_i(m_i+1)+\rho(m_4)
\end{equation}
   \centerline{and}
\begin{equation}\label{eq:m12}
   d\geq m_1+m_2
\end{equation}
   \centerline{and}
\begin{equation}
\label{eqalpha}
d+2\leq \max\left\{\frac{2}{\sqrt{s+1}}\sum_{i=1}^s m_i, \quad
m_1+m_2+m_3+m_4, \quad 2m_1\right\},
\end{equation}
   then $\cali^{(2r)}\subset \calm^r\cali^r$ for all $r\geq 1$.
\end{theorem}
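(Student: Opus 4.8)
The plan is to reduce everything to Corollary \ref{cor:hahu}, so that it suffices to prove, for every $r\ge 1$,
$$\alpha(\cali^{(2r)})\ \ge\ r\bigl(\reg(\cali)+1\bigr).$$
First I would take care of the regularity. Multiplying \eqnref{eqreg} by $\tfrac12$ and adding $1$ one rewrites it as $\binom{d+2}{2}-\sum_i\binom{m_i+1}{2}\ge \tfrac32 m_4^2-\tfrac72 m_4+2$, which is exactly the numerical hypothesis of Theorem \ref{marcin-kryterium} once one notes $\rho(m_4)\ge 3m_4^2-7m_4+2$ for all $m_4\ge1$ (equality for $m_4\ge2$, and $\rho(1)=0>-2$). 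Since $s\ge 9\ge 4$ and \eqnref{eq:m12} holds, Theorem \ref{marcin-kryterium} applies; moreover \eqnref{eqreg} forces that left-hand side to be $\ge 1+\tfrac12\rho(m_4)>0$, so $\sys(d;m_1,\dots,m_s)$ is $h^1$-regular, and the Corollary following Theorem \ref{marcin-kryterium} yields $\reg(\cali)\le d+1$. Hence it remains to prove $\alpha(\cali^{(2r)})\ge r(d+2)$ for all $r\ge1$.

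Write $\cali^{(2r)}=\bigcap_i\frakm_{P_i}^{2rm_i}$ and let $C$ be any nonzero curve in it, of degree $t$, so $\mult_{P_i}(C)\ge 2rm_i$. I would bound $t$ from below according to which of the three quantities in \eqnref{eqalpha} realizes the maximum. If $d+2\le 2m_1$, then already $t\ge\mult_{P_1}(C)\ge 2rm_1\ge r(d+2)$. If $d+2\le m_1+m_2+m_3+m_4$, fix a smooth conic $Q$ through $P_1,P_2,P_3,P_4$ (a general member of the pencil of conics through these four general points), write $C=aQ+C'$ with $Q\nmid C'$, so $\deg C'=t-2a$ and $\mult_{P_i}(C')\ge\max(2rm_i-a,0)$, $\mult_{P_i}(Q)=1$; applying B\'ezout to $C'\cap Q$ when $C'\ne0$ (and treating $C=aQ$ directly otherwise, where $a\ge 2rm_1$), a short case check on how $a$ compares with the $2rm_i$ for $i\le4$ gives $2t\ge\sum_{i=1}^{4}2rm_i$, hence $t\ge r(m_1+m_2+m_3+m_4)\ge r(d+2)$.

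The remaining case $d+2\le\frac{2}{\sqrt{s+1}}\sum_i m_i$ is the heart of the matter: here I must establish the estimate $\alpha\bigl(\bigcap_i\frakm_{P_i}^{a_i}\bigr)\ge\frac{1}{\sqrt{s+1}}\sum_i a_i$ for $s\ge 9$ general points (applied with $a_i:=2rm_i$, so that the right-hand side is exactly $r\cdot\frac{2}{\sqrt{s+1}}\sum_i m_i\ge r(d+2)$). I would pass to the blow-up $X=\Bl_{P_1,\dots,P_s}\P^2$ with exceptional divisors $E_1,\dots,E_s$, write the proper transform of a minimal curve as $\widetilde C\equiv tH-\sum_i e_iE_i$ with $e_i\ge a_i$, and split into two cases. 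If $\widetilde C^{\,2}\ge 0$ then $t^2\ge\sum_i e_i^2\ge\frac1s\bigl(\sum_i e_i\bigr)^2\ge\frac1s\bigl(\sum_i a_i\bigr)^2$ by Cauchy--Schwarz, which is more than enough. If $\widetilde C^{\,2}<0$ one uses that the points are general: the only irreducible curves of negative self-intersection on $X$ are $(-1)$-curves $\Gamma$, for which $\Gamma\cdot(-K_X)=1$; intersecting $\widetilde C$ with $-K_X=3H-\sum_iE_i$ (for $s=9$ this is nef; for larger $s$ one works with the $(-1)$-curves occurring in the negative part of $\widetilde C$) gives $3t\ge\sum_i e_i$, so $t\ge\frac13\sum_i a_i\ge\frac1{\sqrt{s+1}}\sum_i a_i$ because $s\ge 8$. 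Combining the three cases, $\alpha(\cali^{(2r)})\ge r(d+2)\ge r(\reg(\cali)+1)$, and Corollary \ref{cor:hahu} concludes.

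I expect the third case to be the genuine obstacle. The first two are elementary B\'ezout estimates, and the regularity reduction is just bookkeeping with \eqnref{eqreg}; but the bound $\alpha\bigl(\bigcap\frakm_{P_i}^{a_i}\bigr)\ge\frac1{\sqrt{s+1}}\sum a_i$ uses generality of the points in an essential way, and the step ``$\widetilde C^{\,2}<0\Rightarrow$ only $(-1)$-curves occur'' has to be justified carefully (via the structure of negative curves on blow-ups of $\P^2$ at general points, using $s\ge 9$, or via a Zariski-decomposition argument), as does the reduction of the reducible and non-reduced possibilities for $C$ to this dichotomy.
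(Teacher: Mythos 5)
Your overall strategy is the same as the paper's: reduce to Corollary \ref{cor:hahu} by extracting $\reg(\cali)\le d+1$ from Theorem \ref{marcin-kryterium} (your bookkeeping converting \eqnref{eqreg} into $\binom{d+2}{2}-\sum\binom{m_i+1}{2}\ge\frac32m_4^2-\frac72m_4+2$ is exactly right), and then verify $\alpha(\cali^{(2r)})\ge r(d+2)$ case by case according to which term realizes the maximum in \eqnref{eqalpha}. The case $d+2\le 2m_1$ is handled as in the paper. For $d+2\le m_1+m_2+m_3+m_4$ your B\'ezout argument with a conic through $P_1,\dots,P_4$ is correct (it is even simpler to pick a member of the pencil of conics through the four points which is not a component of $C$ and intersect directly) and replaces the paper's step, which instead applies the standard Cremona transformation to $\call(r(d+2)-1;2m_1,\dots,2m_s)$ and observes that the transformed system has degree smaller than its fourth multiplicity; both routes are fine.

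The genuine gap is in the third case, and you correctly sense where it is. You need the Nagata-type bound $\alpha\bigl(\bigcap_i\frakm_{P_i}^{a_i}\bigr)\ge\frac{1}{\sqrt{s+1}}\sum_i a_i$ for $s\ge 9$ general points, and your derivation of it rests on the assertion that on the blow-up of $\P^2$ at $s$ general points every irreducible curve of negative self-intersection is a $(-1)$-curve. For $s\ge 10$ this is not a known fact but a well-known open conjecture (a weak form of SHGH, closely tied to Nagata's conjecture), and nothing in your sketch gets around it: adjunction only gives $\widetilde C\cdot(-K_X)=\widetilde C^2+2-2p_a(\widetilde C)$, so a hypothetical negative curve of large arithmetic genus could violate $3t\ge\sum e_i$; for $s\ge 10$ the divisor $-K_X$ is neither nef nor effective, so the $s=9$ argument does not extend; and the ``negative part'' variant again presupposes knowing which negative curves occur. (The reduction from reducible or non-reduced $C$ to irreducible components must also precede the dichotomy, as you note, though that part is routine since the Seshadri quotient is computed on irreducible curves.) The paper sidesteps all of this by quoting the known lower bound $\varepsilon(\calo_{\P^2}(1);P_1,\dots,P_s)\ge\frac{1}{\sqrt{s+1}}$ for $s\ge 9$ general points, i.e.\ \eqnref{eq:sesh bound} from \cite[Theorem 1(a)]{Xu94}, a theorem proved by a deformation/degeneration argument rather than by classifying negative curves. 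Replacing your blow-up discussion by that citation (applied to the irreducible components of a putative low-degree member of $\cali^{(2r)}$) closes the gap and completes the proof.
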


\begin{proof}
  It follows from Theorem \ref{marcin-kryterium} that \eqnref{eqreg} and \eqnref{eq:m12}
  imply that $\reg(\cali)\leq d+1$, so that $\cali$ is generated
  in degree $d+1$. In order to apply Corollary \ref{cor:hahu} we
  have to check that the inequality $\alpha(\cali^{(2r)})\geq r(d+2)$ holds.

  By assumption \eqnref{eqalpha} we need to consider three cases.\\
  Assume first that
\begin{equation}
\label{eqnagata}
   d+2 \leq \frac{2}{\sqrt{s+1}}\sum_{i=1}^s m_i.
\end{equation}
   A lower bound on Seshadri constant in $s\geq 9$ general points in $\mathbb{P}^2$, see e.g. \cite[Theorem 1(a)]{Xu94}
   \begin{equation}\label{eq:sesh bound}
      \varepsilon({\cal O}_{\mathbb{P}^2}(1),P_1,...,P_s)\geq \frac{1}{\sqrt{s+1}}
   \end{equation}
   combined with \eqnref{eqnagata} implies that $(\cali^{(2r)})_{r(d+2)}$ is empty
   so that $\alpha(\cali^{(2r)})> r(d+2)$ in that case.

   In the second case we assume
   $$d+2 \leq m_1+m_2+m_3+m_4.$$
   Then the standard Cremona transformation applied to the system
   $\call(r(d+2)-1;2m_1,...,2m_s)$ gives
   $$\call(2r(d+2-m_1-m_2-m_3-1); $$$$r(d+2-2m_2-2m_3)-1, r(d+2-2m_1-2m_3)-1, r(d+2-2m_1-2m_2)-1,2m_4,...,2m_s)$$
   which is obviously empty since its degree is less than the fourth multiplicity.

   Finally, from the assumption $d+2\leq 2m_1$ it follows immediately that
   $$(\cali^{(2r)})_{r(d+2)-1}=0$$
   and we are done.
\end{proof}
\begin{remark}\rm
   Computer experiments suggest that for nearly all sequences of multiplicities the assumptions of Theorem 4.4 are fulfilled.
   It took even some time (and without computer aid it would not be so easy) to find some examples, where Theorem \ref{zastosowanie}
   does not prove the desired inclusion. Such sequences of multiplicities are for example
   $$(8^9,1^{103});\;\;
     (9^{11},1^{80});\;\;
     (20^{12},2^{90});\;\;
     (30^{11},3^{130});\;\;
     (60^{11},5^{224});\;\;
     (130^{12},12^{101}),$$
   where the notation $a^b$ means that $a$ appears in the sequence $b$ times.
   All these examples are of similar nature, namely few points with high multiplicity, and a long tail of low multiplicities.
   It would be interesting to find at least bounds on the initial degree of ideals associated to them
   in the spirit of the proof of Theorem \ref{zastosowanie}.
\end{remark}
   In the sequel, we will frequently use the following purely numerical observation.
\begin{lemma}\label{lem:gwiazdka}
   Let $R,D\geq 0$ be real numbers. If $R^2-3R\geq D$, then there exists an integer $d$
   such that
   \begin{itemize}
    \item[a)] $d(d+3)\geq D$ and,
    \item[b)] $d+2\leq R$.
   \end{itemize}
\end{lemma}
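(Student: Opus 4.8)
The plan is to find the desired integer $d$ explicitly as a floor of a natural real quantity. Set $R$ as the natural real bound and solve the quadratic. Concretely, condition (b) forces $d\le R-2$, so the largest admissible candidate is $d=\lfloor R-2\rfloor$; I would take this value (after dealing with the degenerate small cases) and verify condition (a).

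First I would dispose of trivialities. If $R<3$, then $R^2-3R<0\le$ is not possible to read off directly, but note $R^2-3R\ge D\ge 0$ together with $R\ge 0$ forces $R\ge 3$; so we may assume $R\ge 3$, whence $R-2\ge 1$ and $d:=\lfloor R-2\rfloor\ge 1\ge 0$, so the hypothesis $d\ge 0$ is met and (b) holds by construction.

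Next, the heart of the argument is checking (a). Write $d=\lfloor R-2\rfloor$, so $d> R-3$, i.e.\ $d\ge R-3$ is false in general but $d+1>R-2$, hence $d+2>R-1$ and $d+3>R$. Wait — more carefully: from $d> R-3$ we get $d+3>R$ and $d+\tfrac32>R-\tfrac32$, so
$$ d(d+3)=\Bigl(d+\tfrac32\Bigr)^2-\tfrac94>\Bigl(R-\tfrac32\Bigr)^2-\tfrac94=R^2-3R\ge D. $$
This uses only that $R-\tfrac32\ge 0$ (true since $R\ge 3$) so that squaring preserves the strict inequality $d+\tfrac32>R-\tfrac32$. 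Thus (a) holds, in fact strictly, which completes the proof.

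The only subtle point — the ``main obstacle'' — is making sure the rounding does not break (a): one must verify that the completed-square expression $(d+\tfrac32)^2-\tfrac94$ evaluated at $d=\lfloor R-2\rfloor$ still dominates $R^2-3R$. The key observation that rescues this is that $d+\tfrac32 > R-\tfrac32$ whenever $d>R-3$, and that $R-\tfrac32$ is non-negative precisely because the hypothesis $R^2-3R\ge D\ge 0$ together with $R\ge 0$ already forces $R\ge 3$; without that sign information the squaring step would fail. Everything else is a routine one-line computation.
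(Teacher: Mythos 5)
Your argument is correct and essentially the same as the paper's: the paper chooses an integer in the interval $[L_0,R-2]$, where $L_0$ is the larger root of $L(L+3)=D$, by observing that this interval has length at least $1$, while you take the explicit candidate $d=\lfloor R-2\rfloor$ and verify $d(d+3)>R^2-3R\geq D$ by completing the square --- the same underlying quadratic computation, just phrased constructively. The only (harmless) caveat, shared by the paper's own proof, is the degenerate case $R=0$, $D=0$, where your deduction that $R\geq 3$ is forced fails (the lemma still holds there, e.g.\ with $d=-3$); in every application $R$ is large, so nothing is lost.
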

\proof
   Let $L_0:=\frac{\sqrt{9+4\cdot D}-3}{2}$ be the greater root of the equation $L(L+3)-D=0$.
   Obviously for any $d\geq L_0$ condition a) in the Lemma holds.
   In order to check condition b), we show that there exists an integer $d$ in the interval $[L_0,R-2]$.
   This is immediate, once we show that the length of this interval is at least $1$, i.e.
   $R-2-L_0\geq 1$ holds. But this follows immediately from the assumption in the Lemma.
\endproof
   The proof of Theorem B will be split in two cases.
\subsection{Proof of Theorem B a)}
   We begin with the overview of the structure of the proof of
   part a) of Theorem B. It follows from
   \begin{center}
   \begin{tabular}{rcl}
      Theorem \ref{thm:hahu} & for & $m_1=\dots=m_s=1$,\\
      Proposition \ref{drugie} & for & $m_1\geq 2$ and $m_2=\dots=m_s=1$,\\
      Theorem \ref{result2} & for & all other cases.
   \end{tabular}
   \end{center}
   It is convenient in the almost homogeneous case we study here
   to change a little bit the notation and begin the numbering of points by $0$
   rather than $1$, the point $P_0$ being the point with distinguished multiplicity.
   This convention simplifies the notation below. We hope that this will cause no confusion
   and the reader will have no difficulties to modify Theorem B a) accordingly.

\begin{proposition}\label{drugie}
   Let $P_0,...,P_s$ be general points on $\mathbb{P}^2$, with $s+1\geq 9$. Let
   $\cali=\mathfrak{m}_{P_0}^{m_0}\cap\mathfrak{m}_{P_1}\cap...\cap\mathfrak{m}_{P_s}$, $m_0 \geq 2$.
   Then $\cali^{(2r)}\subset \calm^r\cali^r$ for all $r\geq 1$.
\end{proposition}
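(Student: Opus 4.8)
The plan is to apply Corollary \ref{cor:hahu}, so the whole game is to produce, for every $r\geq 1$, an integer $d$ with $\reg(\cali)\leq d+1$ and $\alpha(\cali^{(2r)})\geq r(d+2)$; equivalently, to verify the hypotheses of Theorem \ref{zastosowanie} for the almost homogeneous sequence $(m_0,1,\dots,1)$ (with $s$ ones). Here $m_1=m_0\geq 2$ and $m_2=m_3=m_4=1$, so $\rho(m_4)=\rho(1)=0$ and the regularity condition \eqnref{eqreg} becomes $d(d+3)\geq m_0(m_0+1)+2s$, while the degree condition \eqnref{eq:m12} reads $d\geq m_0+1$. The key step is therefore to check the third condition \eqnref{eqalpha}, i.e. that one can choose such a $d$ with
$$d+2\leq \max\Bigl\{\tfrac{2}{\sqrt{s+2}}\bigl(m_0+s\bigr),\ m_0+3,\ 2m_0\Bigr\}$$
(remembering the total number of points is $s+1$, so $\sqrt{s+1}$ in \eqnref{eqalpha} becomes $\sqrt{s+2}$ in our shifted notation, and $m_1+m_2+m_3+m_4=m_0+3$).

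The natural way to do the bookkeeping is via Lemma \ref{lem:gwiazdka} with $D=m_0(m_0+1)+2s$ and $R$ equal to the maximum above: if $R^2-3R\geq D$ then an integer $d$ exists satisfying both $d(d+3)\geq D$ and $d+2\leq R$, and one then checks separately that this $d$ also satisfies $d\geq m_0+1$ (this last point needs a small argument — one wants the interval $[L_0,R-2]$ from the proof of Lemma \ref{lem:gwiazdka} to lie to the right of $m_0$, which should follow since $D\geq m_0(m_0+1)$ forces $L_0\geq m_0-1$, while handling the boundary case $d=m_0$ directly). So the task reduces to the elementary inequality
$$\Bigl(\max\Bigl\{\tfrac{2(m_0+s)}{\sqrt{s+2}},\,2m_0\Bigr\}\Bigr)^2 - 3\max\Bigl\{\tfrac{2(m_0+s)}{\sqrt{s+2}},\,2m_0\Bigr\}\ \geq\ m_0(m_0+1)+2s,$$
where I have already dropped the middle term $m_0+3$ as being dominated by $2m_0$ for $m_0\geq 2$. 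I would split into two regimes according to which of the two remaining terms is larger — roughly, $2m_0$ wins when $m_0$ is large relative to $s$, and the Nagata-type term wins when $s$ is large relative to $m_0$ — and in each regime reduce to a one-variable estimate.

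In the regime where $2m_0\geq \tfrac{2(m_0+s)}{\sqrt{s+2}}$ (equivalently $m_0\sqrt{s+2}\geq m_0+s$, which holds once $m_0$ is not too small compared to $s$), the inequality becomes $4m_0^2-6m_0\geq m_0^2+m_0+2s$, i.e. $3m_0^2-7m_0\geq 2s$; combined with the regime condition this is a routine check, using $s+1\geq 9$ so $s\geq 8$. In the complementary regime one uses $R\geq \tfrac{2(m_0+s)}{\sqrt{s+2}}$, so it suffices that $\tfrac{4(m_0+s)^2}{s+2}-\tfrac{6(m_0+s)}{\sqrt{s+2}}\geq m_0(m_0+1)+2s$; since $m_0+s\geq s$ and $s\geq 8$, the subtracted term is absorbed, and one is left to verify $4(m_0+s)^2\geq (s+2)\bigl(m_0^2+m_0+2s\bigr)$ with room to spare, which is clear because $(m_0+s)^2\geq s^2\gg s\cdot 2s/4$ and the cross terms only help. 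The main obstacle is purely computational — pinning down exactly where the two branches of the maximum cross and confirming that the slack is genuinely positive for all $m_0\geq 2$ and all $s\geq 8$, including the extremal small cases $s=8$, $m_0=2$ — but there is no conceptual difficulty, since Theorems \ref{marcin-kryterium} and \ref{zastosowanie} have already done the geometric work.
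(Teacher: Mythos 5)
Your route is genuinely different from the paper's. The paper does not pass through Theorem \ref{zastosowanie} here: it uses the classical fact that one fat point together with general simple points imposes independent conditions, which gives the sharper bound $\binom{t}{2}<s+\binom{m_0+1}{2}$ for $t=\reg(\cali)$, and then verifies $\alpha(\cali^{(2r)})\geq r(t+1)$ directly from the Seshadri bound \eqref{eq:sesh bound} and the trivial bound $2rm_0$, concluding by Lemma \ref{lemathahu} (with its own finite list of small $(m_0,s)$ checked by hand). Your plan -- feeding the sequence $(m_0,1,\dots,1)$ into Theorem \ref{zastosowanie} via Lemma \ref{lem:gwiazdka} -- is admissible in principle (the total number of points is $s+1\geq 9$, and $\rho(1)=0$), and it is essentially the scheme the paper uses for Theorem \ref{result2}; indeed the paper isolates Proposition \ref{drugie} precisely because this is the case where the analogous numerical inequality (Lemma \ref{comb1}) breaks down.

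And it does break down in your argument too, at exactly the extremal case you promised to confirm. With $R=\max\bigl\{2(m_0+s)/\sqrt{s+2},\,2m_0\bigr\}$ and $D=m_0(m_0+1)+2s$, the inequality $R^2-3R\geq D$ is \emph{false} for $(m_0,s)=(2,8)$: there $R=2\sqrt{10}$, so $R^2-3R=40-6\sqrt{10}\approx 21.03<22=D$; it also fails for $(m_0,s)=(3,8)$ ($\approx 27.53<28$). So the step ``the subtracted term is absorbed \dots with room to spare'' is wrong for $s=8$ and $m_0\in\{2,3\}$, and Lemma \ref{lem:gwiazdka} cannot be invoked there. The gap is fixable: Lemma \ref{lem:gwiazdka} is only a sufficient criterion, and in both exceptional cases one can check the hypotheses of Theorem \ref{zastosowanie} directly with $d=4$ (e.g.\ for $(2,8)$: $4\cdot 7=28\geq 22$, $4\geq m_1+m_2=3$, and $6\leq 2\cdot 10/\sqrt{10}\approx 6.32$), after which your two-regime analysis covers all remaining $(m_0,s)$ with $m_0\geq 2$, $s\geq 8$. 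Two smaller points: dropping $m_0+3$ because it is ``dominated by $2m_0$'' is false for $m_0=2$ (harmless, since shrinking the maximum only strengthens what you prove), and the condition \eqref{eq:m12} is most cleanly handled not by relocating the interval $[L_0,R-2]$ but as in the paper's proofs of Theorems \ref{result2} and \ref{result}: if the $d$ produced satisfies \eqref{eqreg} and \eqref{eqalpha} but $d<m_1+m_2$, replace it by $d'=m_1+m_2$, which still satisfies \eqref{eqreg} by monotonicity of $d\mapsto d(d+3)$ and satisfies \eqref{eqalpha} because $d'+2\leq m_1+m_2+m_3+m_4$.
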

\proof

   Assume that the regularity of $\cali$ is $t$. The points being general impose independent
   conditions, so that we have
   \begin{equation}\label{eq:tsk}
      \binom{t}{2}< s+\binom{m_0+1}{2}.
   \end{equation}
   We claim that $\alpha(\cali^{(2r)})\geq r(t+1)$.
   Taking this for granted for the moment, the assertion follows from Lemma
   \ref{lemathahu}.\\
   Turning to the claim, the lower bound on Seshadri constants
   \eqnref{eq:sesh bound} implies that
   $$\alpha(\cali^{(2r)})\geq\frac{2r(m_0+s)}{\sqrt{s+2}}.$$
   Since the multiplicity of an element in $\cali^{(2r)}$
   in the distinguished point is $2rm_0$ we have additionally
   $$\alpha(\cali^{(2r)})\geq 2rm_0.$$
   Thus, dividing by $r$, it suffices to prove that the following
   \begin{equation}\label{eq:both}
   2\frac{m_0+s}{\sqrt{s+2}} \geq t+1\; \text{ or }\; 2m_0 \geq t+1
   \end{equation}
   holds for all $t$ satisfying \eqnref{eq:tsk}.

   Assume that \eqnref{eq:both} does not hold and plug
   $$t = 2\frac{m_0+s}{\sqrt{s+2}} - 1$$ into \eqnref{eq:tsk}.
   After a small computation we obtain the inequality
   $$\frac{4m_0^2+8m_0s+4s^2}{s+2} +2 \leq 2s+m_0^2+m_0+\frac{6m_0}{\sqrt{s+2}}+\frac{6s}{\sqrt{s+2}}.$$

   Let us assume that $s \geq 34$, so that in particular $\frac{6}{\sqrt{s+2}}\leq 1$. Then
   $$\frac{4m_0^2+8m_0s+4s^2}{s+2} \leq 3s+m_0^2+2m_0,$$
   hence
   $$2m_0^2+6m_0s+s^2 \leq 6s+sm_0^2+4m_0.$$
   But for $m_0^2 \leq s$ we have
   $$2m_0^2 > 4m_0, \quad 6m_0s > 6s \;\mbox{ and }\; s^2 \geq sm_0^2,$$
   which gives a contradiction.

   Now we do the same for $t=2m_0-1$ obtaining
   $$3m_0^2-7m_0+2 \leq 2s$$ and observe that for $s \leq m_0^2$ and $m_0 \geq 7$ this is absurd.

   Thus we are left with a finite number of pairs $(m_0,s)$ to check, namely
   $m_0^2 \leq s \leq 33$ and $s \leq m_0^2 \leq 36$. In each case we directly compute maximal possible $t$ satisfying \eqnref{eq:tsk}
   and check that it fits to \eqnref{eq:both}.
\endproof

   In order to prove Theorem \ref{result2} below, we need first the following Lemma.
   We abbreviate $\Sigma=sm+m_0$ and $Q=sm^2+m_0^2$.
\begin{lemma}\label{hopefullylast}
   Let $m_0\geq 1$, $m\geq 2$ and $s\geq 8$ be integers.
   \begin{itemize}
      \item[a)] if $m_0\geq \frac{\Sigma}{\sqrt{s+2}}$, then $4m_0^2\geq Q+\Sigma+3m^2+6m_0$,
      \item[b)] if $m_0\leq \frac{\Sigma}{\sqrt{s+2}}$, then $\frac{4}{s+2}\Sigma^2\geq m_0^2+(s+3)m^2+3\Sigma$.
   \end{itemize}
\end{lemma}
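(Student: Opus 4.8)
The statement is purely arithmetic, so the plan is the same for both items: eliminate $\Sigma$ and $Q$ in favour of $m_0,m,s$ using $\Sigma=sm+m_0$ and $Q=sm^2+m_0^2$, turn the hypothesis into an explicit bound on $m_0$, and reduce the desired inequality to a polynomial inequality that is settled with $s\geq 8$ and $m\geq 2$.

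For a), I would first rewrite the hypothesis $m_0\geq\Sigma/\sqrt{s+2}$ as $(s+2)m_0^2\geq(sm+m_0)^2$ and solve this quadratic in $m_0$, obtaining $m_0\geq\frac{sm\bigl(1+\sqrt{s+2}\bigr)}{s+1}$. From this bound I extract the two facts I need: that $m_0\geq 7$ (already forced by $s\geq 8$ and $m\geq 2$), and the cruder estimate $m_0^2\geq\frac{s^2(s+2)}{(s+1)^2}m^2$, obtained by keeping only the $(\sqrt{s+2})^2$ term. Substituting $Q$ and $\Sigma$, the target $4m_0^2\geq Q+\Sigma+3m^2+6m_0$ becomes, after collecting terms, $3m_0^2-7m_0\geq sm^2+sm+3m^2$; since $m_0\geq 7$ allows replacing $7m_0$ by $m_0^2$, it suffices to prove $2m_0^2\geq sm^2+sm+3m^2$, and because $m\geq 2$ gives $sm\leq\tfrac12 sm^2$ this is implied by $2m_0^2\geq\tfrac32(s+2)m^2$, which in turn follows from the lower bound on $m_0^2$ together with the elementary inequality $\tfrac{s^2}{(s+1)^2}\geq\tfrac34$, valid for $s\geq 7$.

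For b), the hypothesis $m_0\leq\Sigma/\sqrt{s+2}$ rewrites as $m_0\leq\frac{sm}{\sqrt{s+2}-1}=:M$, so the integer $m_0$ runs over the real interval $[1,M]$. The key observation is that after substituting $\Sigma=sm+m_0$ the target becomes $g(m_0)\geq(s+3)m^2+3sm$, where
$$g(m_0)=\frac{1}{s+2}\bigl(4s^2m^2+8smm_0+(2-s)m_0^2\bigr)-3m_0$$
is a \emph{concave} function of $m_0$ (the coefficient $\tfrac{2-s}{s+2}$ of $m_0^2$ is negative because $s\geq 8$) while its right-hand side no longer depends on $m_0$. Hence it suffices to verify the inequality at the two endpoints $m_0=1$ and $m_0=M$. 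At $m_0=1$, clearing denominators yields a quadratic in $m$ with positive leading coefficient whose vertex lies below $m=2$, so it reduces to $m=2$, where it reads $6s^2-20s-28\geq 0$, true for $s\geq 5$. At $m_0=M$ one has the identity $(s+2)m_0^2=\Sigma^2$, which collapses the target to $3m_0^2-3m_0\geq(s+3)m^2+3sm$; writing $sm=m_0(\sqrt{s+2}-1)$, dividing by $m_0$, and substituting $u=\sqrt{s+2}$ turns this, together with $m_0=M\geq\frac{2(u^2-2)}{u-1}$ (from $m\geq 2$), into the single polynomial inequality $u^4+7u^3-22u^2-2u+22\geq 0$, which holds for all $u\geq\sqrt{10}$, i.e.\ all $s\geq 8$.

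The step I expect to be the main obstacle is this endpoint $m_0=M$: the bound $M$ carries the square root $\sqrt{s+2}$, and any careless estimate (e.g.\ $\sqrt{s+2}\leq s$) loses the inequality for the few smallest admissible values of $s$, so one must either carry out the substitution $u=\sqrt{s+2}$ above so as to keep the square-root terms exact, or settle those finitely many $s$ by a direct check. Everything else is routine algebra.
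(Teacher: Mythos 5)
Your proposal is correct, but it is organized quite differently from the paper's argument, and the difference is most pronounced in part b). For a), the paper never solves the hypothesis for $m_0$: it multiplies the target by $(s+2)$, replaces $3(s+2)m_0^2$ by $3\Sigma^2$ via the hypothesis, and proves the resulting inequality $3(m_0+sm)^2\geq (s+2)(s+3)m^2+7(s+2)m_0+s(s+2)m$ by summing three termwise estimates valid for $s\geq 8$, $m\geq 2$; your route through the explicit root $m_0\geq \frac{sm(1+\sqrt{s+2})}{s+1}$, the consequences $m_0\geq 7$ and $m_0^2\geq\frac{s^2(s+2)}{(s+1)^2}m^2$, and the bound $\frac{s^2}{(s+1)^2}\geq\frac34$ (valid from $s\geq 7$) is equally elementary and checks out. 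For b), the paper adds the hypothesis in the form $\Sigma^2\geq(s+2)m_0^2$ to an auxiliary inequality in $m,s$ alone, which it then proves by case splits on $m$ and $s$, explicitly leaving the cases $m\in\{2,3\}$, $8\leq s\leq 21$ to a direct verification; your concavity-in-$m_0$ argument is a genuinely different and cleaner decomposition: since the coefficient of $m_0^2$ in the difference is $\frac{2-s}{s+2}<0$, only the endpoints $m_0=1$ and $m_0=M=\frac{sm}{\sqrt{s+2}-1}$ need checking, and I verified both reductions — at $m_0=1$ the quadratic $(3s^2-5s-6)m^2-(3s^2-2s)m-(4s+4)$ has its vertex below $m=2$ and gives $6s^2-20s-28\geq 0$ at $m=2$, and at $m_0=M$ the substitution $u=\sqrt{s+2}$ together with monotonicity in $m$ (the relevant coefficient $2u^4+2u^3-14u^2+2u+11$ is positive for $u\geq\sqrt{10}$, a positivity you use implicitly and should state) yields exactly $u^4+7u^3-22u^2-2u+22\geq 0$, which holds for $u\geq\sqrt{10}$. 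What your approach buys is a uniform proof with no residual finite case analysis; what the paper's buys is that all intermediate inequalities are polynomial with integer data, avoiding any handling of $\sqrt{s+2}$ beyond squaring the hypothesis.
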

\proof
   \textbf{Part a)} An elementary computation shows that the
   assumption in Part a) together with the following inequality
   \begin{equation}\label{eq:N1}
      3(m_0+sm)^2\geq (s+2)(s+3)m^2+7(s+2)m_0+s(s+2)m
   \end{equation}
   implies the assertion. So that it is enough to show
   \eqnref{eq:N1}.
   It follows since for $s\geq 8$ and $m\geq 2$ the following inequalities are satisfied:
   $$2s^2m^2\geq (s+2)(s+3)m^2,$$
   $$s^2m^2\geq s(s+2)m,$$
   $$3m_0^2+6smm_0\geq 7(s+2)m_0.$$
   Adding them gives \eqnref{eq:N1}.
\\
   \textbf{Part b)}
   Adding
   $$\Sigma^2\geq(s+2)m_0^2,$$
   which follows from the assumption in this part,
   to the following inequality
   \begin{equation}\label{nier}
      3(m_0+sm)^2\geq (s+2)(s+3)m^2+3(s+2)(m_0+sm)
   \end{equation}
   one obtains the inequality claimed in b). So it suffices to
   prove \eqnref{nier}.
   For all $s\geq 8$ and $m\geq 2$ we have
   $$3m_0+6sm\geq 3s+6,$$
   so that it is enough to check that
   \begin{equation}\label{eq:nier2}
      3s^2m\geq (s+2)(s+3)m+3(s+2)s.
   \end{equation}
   For $s\geq 8$ we have
   $$2s^2\geq (s+2)(s+3)$$ so \eqnref{eq:nier2}
   reduces to
   \begin{equation}\label{eq:nier3}
     ms^2\geq 3s^2+6s.
   \end{equation}
   This inequality is satisfied for all $m\geq 4$.\\

   For $m=2$ or $m=3$ we show \eqnref{eq:nier2} slightly
   differently. Namely, we bound the summands on the right hand
   side in the following way
   $$\frac54s^2\geq(s+2)(s+3)\;\mbox{ and }\; \frac74s^2m\geq
   3(s+2),$$
   which holds for all $s\geq 22$.

   In all remaining, finitely many cases, i.e. $m=2$ or $m=3$ and $8\leq s\leq 21$
   we check the inequality in Part b) directly by hand.
\endproof
   We are now in the position to justify part a) of Theorem B.
\begin{theorem}\label{result2}
   Let $m_0\geq 1$ and $m\geq 2$ be integers. Consider the ideal
   $\cali=\mathfrak{m}_{P_0}^{m_0}\cap\mathfrak{m}_{P_1}^{m}\cap...\cap\mathfrak{m}_{P_{s}}^{m}$ of
   $s+1\geq 9$ general (fat) points. Then $\cali^{(2r)}\subset \calm^r\cali^r$ for all $r\geq 1$.
\end{theorem}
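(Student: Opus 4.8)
The plan is to deduce Theorem~\ref{result2} from Theorem~\ref{zastosowanie}, applied to the $s+1$ points $P_0,\dots,P_s$; so throughout I would read that theorem with its ``$s$'' replaced by ``$s+1$''. Following Lemma~\ref{hopefullylast}, set $\Sigma=sm+m_0$ and $Q=sm^2+m_0^2$. The multiplicity sequence is $(m_0,m,\dots,m)$ with $s\ge 8$ copies of $m$, so its fourth largest entry is $m$, regardless of whether $m_0\ge m$ or $m_0<m$; hence $\rho(m_4)=\rho(m)=3m^2-7m+2\le 3m^2$, while $\sum_i m_i(m_i+1)=Q+\Sigma$. Writing $D:=Q+\Sigma+\rho(m)$, condition \eqnref{eqreg} becomes exactly $d(d+3)\ge D$, and $m_1+m_2=\max(m_0,m)+m$. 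Thus it suffices to exhibit an integer $d$ with
$$d(d+3)\ge D,\qquad d\ge\max(m_0,m)+m,\qquad d+2\le\max\Big\{\tfrac{2}{\sqrt{s+2}}\Sigma,\ \max(m_0,m)+3m,\ 2\max(m_0,m)\Big\},$$
after which Theorem~\ref{zastosowanie} yields $\cali^{(2r)}\subset\calm^r\cali^r$ for all $r\ge1$. I would then split according to the dichotomy of Lemma~\ref{hopefullylast}.

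First, in the case $m_0\ge\frac{\Sigma}{\sqrt{s+2}}$: a quick computation (using only $s\ge2$) shows this forces $m_0\ge m$, and in fact $m_0\ge\frac{sm}{\sqrt{s+2}-1}$, which forces $m_0\ge m+2$ once $s\ge8$ and $m\ge2$. Lemma~\ref{hopefullylast}\,a) gives $4m_0^2\ge Q+\Sigma+3m^2+6m_0\ge D+6m_0$, so I would simply take $d=2m_0-2$: then $d(d+3)=(2m_0-2)(2m_0+1)=4m_0^2-2m_0-2\ge 4m_0^2-6m_0\ge D$; the bound on $d+2$ holds because $d+2=2m_0=2\max(m_0,m)$; and $d=2m_0-2\ge m_0+m$ because $m_0\ge m+2$. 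Hence Theorem~\ref{zastosowanie} applies verbatim.

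Second, in the case $m_0<\frac{\Sigma}{\sqrt{s+2}}$: put $R:=\frac{2}{\sqrt{s+2}}\Sigma$, the first entry of the maximum above. Lemma~\ref{hopefullylast}\,b) gives $R^2=\frac{4}{s+2}\Sigma^2\ge m_0^2+(s+3)m^2+3\Sigma\ge Q+\rho(m)+3\Sigma=D+2\Sigma$, and since $s\ge7$ we have $2\Sigma\ge\frac{6}{\sqrt{s+2}}\Sigma=3R$, so $R^2-3R\ge D$. By Lemma~\ref{lem:gwiazdka} and its proof (which produces an integer $d$ with $L_0\le d\le R-2$, where $L_0$ is the larger root of $L(L+3)=D$, and shows $L_0\le R-3$) one may take $d=\lfloor R\rfloor-2$, so $d(d+3)\ge D$, $d+2\le R$, and $d\ge R-3$. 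Finally a short estimate using $m_0<\frac{\Sigma}{\sqrt{s+2}}$, $\Sigma=sm+m_0$, $s\ge8$ and $m\ge2$ — treating $m_0\le m$ and $m_0>m$ separately — gives $R-3\ge\max(m_0,m)+m$, so this $d$ meets all three requirements and Theorem~\ref{zastosowanie} again applies.

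I expect the regularity input \eqnref{eqreg} to cause no trouble: Lemma~\ref{hopefullylast} is tailored to it and delivers $d(d+3)\ge D$ almost for free. The real obstacle will be fitting $d$ into the narrow window between the lower bound $d\ge m_1+m_2$ of \eqnref{eq:m12} and the upper bound \eqnref{eqalpha} on $d+2$: the inequality \eqnref{eq:m12} is genuinely not a consequence of \eqnref{eqreg} (it already fails for $s=8$, $m=2$ and small $m_0$), so the choice of $d$ must be made by hand in each case, and it is precisely the extra information $m_0\ge m$ (indeed $m_0\ge m+2$) in the first case — and the complementary bound $m_0<\Sigma/\sqrt{s+2}$ in the second — that keeps the window non-empty.
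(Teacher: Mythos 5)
Your proposal is correct, and up to the final step it is the paper's own argument: the same reduction to Theorem~\ref{zastosowanie}, the same dichotomy $m_0\gtrless\Sigma/\sqrt{s+2}$ feeding parts a) and b) of Lemma~\ref{hopefullylast}, the same quantities $D$ and $R\in\{2m_0,\ \tfrac{2\Sigma}{\sqrt{s+2}}\}$, and Lemma~\ref{lem:gwiazdka} to produce $d$ with \eqnref{eqreg} and \eqnref{eqalpha}. Where you diverge is in securing \eqnref{eq:m12}: you verify by hand that the constructed $d$ (namely $d=2m_0-2$ in the first case, $d=\lfloor R\rfloor-2$ in the second) already satisfies $d\ge m_1+m_2$, using the extra estimates $m_0\ge\frac{sm}{\sqrt{s+2}-1}\ge 2m$ in case a) and $R-3\ge\max(m_0,m)+m$ in case b); these estimates do check out. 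The paper avoids this case analysis entirely with a small trick you did not spot: if the $d$ coming from Lemma~\ref{lem:gwiazdka} fails \eqnref{eq:m12}, replace it by $d'=m_1+m_2>d$, which satisfies \eqnref{eqreg} automatically (it is larger) and satisfies \eqnref{eqalpha} because $d'+2=m_1+m_2+2\le m_1+m_2+m_3+m_4$, the middle entry of the maximum. So your closing remark that the window between \eqnref{eq:m12} and \eqnref{eqalpha} forces a hand-made choice of $d$ in each case is a bit pessimistic --- the $d'$-fallback makes your explicit estimates unnecessary --- but your version has the mild virtue of showing that the window is in fact already non-empty for the $d$ produced by Lemma~\ref{lem:gwiazdka}.
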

\proof
   Let $D=m_0(m_0+1)+s\cdot m(m+1)+\rho(m)$.\\
   If $m_0\geq\frac{m_0+sm}{\sqrt{s+2}}$ we put $R=2m_0$.
   Then part a) of Lemma \ref{lem:gwiazdka} implies
   \begin{equation}\label{eq:sss}
      R^2-3R\geq D
   \end{equation}
   in this case.\\
   If $m_0\leq\frac{m_0+sm}{\sqrt{s+2}}$ we put $R=\frac{2(m_0+sm)}{\sqrt{s+2}}$.
   Then part b) in Lemma \ref{lem:gwiazdka} implies again
   \eqnref{eq:sss} in this case.

   Thus \eqnref{eq:sss} is always satisfied and
   Lemma \ref{lem:gwiazdka} implies that there exists $d$
   satisfying \eqnref{eqreg} and \eqnref{eqalpha}. If this $d$
   satisfies also \eqnref{eq:m12}, then we are done.

   Otherwise we work with a new $d':=m_1+m_2>d$. This $d'$
   satisfies obviously \eqnref{eq:m12} and also
   \eqnref{eqreg} and \eqnref{eqalpha} as $d'+2\leq m_1+m_2+m_3+m_4$.
   Thus Theorem \ref{zastosowanie} gives the assertion.
\endproof

\subsection{Proof of Theorem B b)}
   We work under the assumption
   $$m_s\geq \frac{m_1}{2}.$$
   In the sequel we will use again the following notation:
   $$\Sigma=m_1+\ldots+m_s,\;\mbox{ and }\;
      Q=m_1^2+\ldots+m_s^2.$$

   Aiming for Theorem \ref{result} we prove first the following
   purely numerical lemma:
\begin{lemma}\label{comb1}
   With the above notation and assuming $s\geq 9$ we have
   \begin{equation}\label{eq:mmm}
      \frac{4}{s+1}\Sigma^2 \geq Q+\left(\frac{6}{\sqrt{s+1}}+1\right)\Sigma+\varrho(m_4).
   \end{equation}
   except for $m_1=2$, $m_2 = \ldots = m_s = 1$.
\end{lemma}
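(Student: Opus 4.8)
The plan is to reduce everything to a lower bound for $\Sigma^2$ in terms of $Q$ using the uniform-fatness hypothesis, and then absorb the linear-in-$\Sigma$ terms and the constant $\varrho(m_4)$ into the resulting slack. First I would record two elementary consequences of $m_s\ge m_1/2$ (equivalently $m_i\in[m_1/2,m_1]$ for all $i$). On the one hand, by Cauchy--Schwarz $\Sigma^2\le sQ$; more usefully, since each $m_i\le m_1\le 2m_s\le 2m_i$, one gets a reverse estimate: $Q=\sum m_i^2\le m_1\sum m_i=m_1\Sigma$, and $\Sigma\ge s m_s\ge s m_1/2$, so $Q\le m_1\Sigma\le \frac{2}{s}\Sigma^2$. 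Thus $\frac{4}{s+1}\Sigma^2-Q\ge\frac{4}{s+1}\Sigma^2-\frac{2}{s}\Sigma^2=\frac{2(s-2)}{s(s+1)}\Sigma^2$, which for $s\ge 9$ is a genuinely positive multiple of $\Sigma^2$, roughly $\tfrac{2}{s}\Sigma^2$. Note $\varrho$ here must be the function $\rho$ introduced before Theorem \ref{zastosowanie} (so $\varrho(m_4)=(3m_4-1)(m_4-2)$ for $m_4\ge 2$ and $0$ for $m_4=1$), and since $m_4\le m_1\le \frac{2}{s}\Sigma$ we have $\varrho(m_4)\le 3m_4^2\le 3m_1^2\le \frac{12}{s^2}\Sigma^2$, again small compared to the leftover $\tfrac{2}{s}\Sigma^2$ once $s$ is large.

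Next I would compare the surviving quadratic slack against the remaining right-hand side, namely $\bigl(\tfrac{6}{\sqrt{s+1}}+1\bigr)\Sigma+\varrho(m_4)$. Using $\Sigma\ge s m_1/2\ge s/2$ (since $m_1\ge 1$), the linear term is at most $\bigl(\tfrac{6}{\sqrt{s+1}}+1\bigr)\Sigma\le 3\Sigma\le \tfrac{6}{s}\Sigma^2$ for $s$ large — wait, more carefully one should write $\bigl(\tfrac{6}{\sqrt{s+1}}+1\bigr)\Sigma\le C\cdot\Sigma$ with $C\le 3$, and then observe $C\Sigma\le \tfrac{2C}{s^2}\Sigma^2\cdot s/... $; cleaner: since $\Sigma\ge s/2$ we get $\Sigma\le \tfrac{2}{s}\Sigma^2$, hence $C\Sigma\le \tfrac{2C}{s}\Sigma^2$. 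Combining, the inequality \eqref{eq:mmm} follows provided $\tfrac{2(s-2)}{s(s+1)}\ge \tfrac{2C}{s}+\tfrac{12}{s^2}$, i.e. $\tfrac{s-2}{s+1}\ge C+\tfrac{6}{s}$, which fails for small $s$ but holds once $s$ is above some explicit threshold (something like $s\ge 40$ suffices with $C=3$, and one can do slightly better by tracking $\tfrac{6}{\sqrt{s+1}}+1$ honestly rather than bounding it by $3$).

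Finally, for the remaining small values of $s$ (roughly $9\le s\le 40$, the exact range depending on how tight one is willing to be in the asymptotic step) I would argue by a finite, essentially mechanical case analysis: for fixed $s$, both sides of \eqref{eq:mmm} are determined by the multiplicity vector, and since $m_i\in[m_1/2,m_1]$ one can normalize — either rescale by $m_1$ and treat the problem as optimizing a quadratic form over a box, or simply note that scaling all $m_i$ by a common factor $\lambda$ multiplies the quadratic terms by $\lambda^2$ and the linear/constant terms by at most $\lambda^2$, so it suffices to check the worst case $m_1=2$, $m_s=1$ (the extreme of the constraint) together with the genuinely small cases $m_1=1$ and $m_1=2$ directly, which is exactly where the stated exception $m_1=2,\ m_2=\dots=m_s=1$ shows up (there the left side is $\tfrac{4(s+1)^2}{s+1}=4(s+1)$ while the right side is $s\cdot 4 + 1\cdot\bigl(\tfrac{6}{\sqrt{s+1}}+1\bigr)(s+1)+0 = 4s+1+(s+1)+6\sqrt{s+1}$... one checks this exceeds $4s+4$, confirming the exception is real). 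The main obstacle is getting the transition value of $s$ low enough that the finite check is short; I expect the honest treatment of $\tfrac{6}{\sqrt{s+1}}+1$ rather than a crude bound, plus the box-optimization normalization, keeps the case analysis to a handful of $(s,m_1)$ pairs rather than an unwieldy enumeration.
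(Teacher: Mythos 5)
Your reduction collapses at the central asymptotic step, and not just for small $s$: the comparison you ultimately need, $\frac{s-2}{s+1}\geq C+\frac{6}{s}$ with $C=\frac{6}{\sqrt{s+1}}+1$, is false for \emph{every} $s$, since the left-hand side is below $1$ while $C>1$ (so the claim that ``$s\geq 40$ suffices with $C=3$'' cannot be right). The loss is built into your two crude estimates. First, $Q\leq m_1\Sigma\leq\frac{2}{s}\Sigma^2$ is far from sharp on the box $m_i\in[m_1/2,m_1]$: the extremal ratio there is about $\frac{9}{8s}\Sigma^2$ (attained when roughly a third of the points carry multiplicity $m_1$), so your quadratic slack $\frac{4}{s+1}\Sigma^2-Q$ is reduced to roughly $\frac{2}{s}\Sigma^2$ (incidentally $\frac{4}{s+1}-\frac{2}{s}=\frac{2(s-1)}{s(s+1)}$, not $\frac{2(s-2)}{s(s+1)}$, though that slip is harmless). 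Second, controlling the linear term via $\Sigma\geq s/2$ (which uses only $m_1\geq1$) turns $\bigl(\frac{6}{\sqrt{s+1}}+1\bigr)\Sigma$ into at least $\frac{2}{s}\Sigma^2$, which already exceeds that slack before $\varrho(m_4)$ is even considered. No choice of threshold repairs this; and the inequality really is tight at small multiplicities (witness the stated exception $m_1=2$, $m_2=\dots=m_s=1$; in fact the all-ones vector at $s=9$ also lands on the wrong side of \eqref{eq:mmm}), so any argument that sees $m_1$ only through $m_1\geq1$ is doomed.

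The fallback does not close the gap either: for fixed $s$ the admissible multiplicity vectors form an infinite family (unbounded $m_1$), so a ``finite mechanical check'' presupposes a bound on $m_1$, which is exactly what the failed asymptotic step was meant to provide; and the rescaling heuristic is invalid because the linear term scales like $\lambda$ while the quadratic terms scale like $\lambda^2$, so large multiplicities make \eqref{eq:mmm} easier and the dangerous cases are precisely the small ones you cannot enumerate away. (Your verification of the exceptional case also miscomputes $Q$: there $Q=s+3$, not $4s$.) Compare with the paper's proof, which is necessarily finer: it fixes $m_1,m_2,m_3,m_4$ and $\Sigma$, pushes the remaining $m_i$ to the extremes $m_4$ and $m_1/2$ to maximize the right-hand side, reduces \eqref{eq:mmm} to an explicit polynomial inequality in $s$, $t$, $x=m_1$, $y=m_4$ under the constraints $x\geq y$, $2y\geq x$, $s\geq t\geq4$, and proves it by coefficient estimates for $x\geq4$ plus finite checks, treating $x=2,3$ separately --- which is exactly where the exception emerges. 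To salvage your route you would need at least the sharp bound $Q\leq\frac{9}{8s}\Sigma^2$ (or an optimization over the box as in the paper) together with a separate treatment of $m_1\leq3$; with the bounds as written the argument proves nothing for any $s$.
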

\proof
   We relax the assumption that $m_i$'s are integers
   (but we keep the monotonicity assumption $m_1\geq m_2 ... \geq m_s\geq \frac{m_1}{2}$).
   Working
   with fixed $m_1$, $m_2$, $m_3$, $m_4$ and $\Sigma$ we adjust all
   other $m_i$'s so that the right hand side of \eqnref{eq:mmm}
   becomes maximal possible.
   Observe that if we subtract from an $m_k$ a positive number $a$ and we add this number to a bigger $m_j$ with
   $j>4$, $\Sigma$ will remain the same and $Q$ will become
   bigger.
   We proceed
   this way to get the sequence of $m_i$'s satisfying the assumed conditions and maximalizing the right
   hand side. The sequence becomes:
   $$m_1, m_2, m_3, m_4=m_5=\dots=m_t, m_{t+1}, \frac{m_1}{2},\dots,\frac{m_1}{2}\;\mbox{ with }\; t\geq 4$$
   where $\frac{m_1}{2}$ stands on positions $t+2,\dots,s$ and we have $\frac{m_1}{2}\leq m_{t+1}\leq m_4$.

   Now, the idea is to reduce inequality \eqnref{eq:mmm} to
   inequality \eqnref{eq:nowa} below, which is easier to prove
   later. To alleviate the notation we put $x:=m_1$ and $y:=m_4$,

   First we bound the left hand side of the inequality \eqnref{eq:mmm} from below:
   $$\frac{4}{s+1}\Sigma^2\geq
     \frac{4}{s+1}\left(x+(t-1)y+(s-t)\frac{x}{2}\right)^2
     =\frac{4}{s+1}\left((t-1)y+(s-t+2)\frac{x}{2}\right)^2.$$

   The right hand side of the inequality \eqnref{eq:mmm} can in turn be bounded from above replacing $m_2$ and $m_3$ by
   $x$,
   $m_{t+1}$ by $y$ and finally $\varrho(y)$ by $3y^2$ as
   follows:

   $$Q+\left(\frac{6}{\sqrt{s+1}}+1\right)\Sigma+\varrho(y)\leq $$
   $$\leq  \left( 3x^2+(t-2)y^2+(s-t-1)\frac{x^2}{4}\right)+
   $$$$+\left(\frac{6}{\sqrt{s+1}}+1\right)\left(
     3x+(t-2)y+(s-t-1)\frac{x}{2}\right)+3y^2=$$
   $$=\left((t+1)y^2+(s-t+11)\frac{x^2}{4}\right)+\left(\frac{6}{\sqrt{s+1}}+1\right)\left(
     (t-2)y+(s-t+5)\frac{x}{2}\right).$$

   Thus \eqnref{eq:mmm}, will follow from
   \begin{equation}\label{eq:nowa}
   \begin{split}
      4\left((t-1)y+(s-t+2)\frac{x}{2}\right)^2 \geq\\
      (s+1)\left((t+1)y^2+(s-t+11)\frac{x^2}{4}\right)+\\
      \left(\frac{6}{\sqrt{s+1}}+1\right)\left(
      (t-2)y+(s-t+5)\frac{x}{2}\right).
   \end{split}
   \end{equation}

   Bounding $\frac{6}{\sqrt{s+1}}$ from above by $2$, multiplying by $4$ and bringing everything on
   one side, we get a new inequality which implies
   \eqnref{eq:nowa} of course.

   \begin{equation}\label{eq:nowa2}
   \begin{array}{c}
   (3x^2-6x)s^2+
   (16xy-7x^2-4y^2+6x-12y)st+
   4(x^2+4y^2-4xy)t^2+\\
   4(x^2-y^2-4xy-9x+6y)s+
   3(16xy-5x^2-12y^2+2x-4y)t+\\
   5x^2+12y^2-32xy+24y-30x\geq 0
   \end{array}
   \end{equation}
   with constraints
   \begin{equation}\label{eq:warunki}
      x\geq y, \ 2y\geq x, \ s\geq 9, \ t\geq 4\; \mbox{ and }\; s\geq t.
   \end{equation}

   Now, we prove inequality \eqnref{eq:nowa2} under additional
   assumption $x\geq 4$.
   It can be easily checked that the coefficients at $t^2$ and $st$
   are always positive under conditions \eqnref{eq:warunki}.

   Similarly we have positivity for slightly modified (by the underlined terms)
   coefficients
   at $s$, $t$ and the constant term
   \begin{align*}
   \underline{19x^2}+4(x^2-y^2-4xy-9x+6y)\geq 0, \\
   \underline{\frac{9}{2}x^2}+3(16xy-5x^2-12y^2+2x-4y)\geq 0,\\
   \underline{\frac{33}{2}x^2}+5x^2+12y^2-32xy+24y-30x\geq 0.
   \end{align*}

   The proof will be finished, once we show that the inequality
   \begin{equation}\label{eq:last}
      6x^2s^2\geq 12xs^2+38x^2s+9x^2t+33x^2
   \end{equation}
   is satisfied under conditions \eqnref{eq:warunki}. Using the
   estimate $t\leq s$ and dividing by $x$ this is reduced to
   \begin{equation}\label{eq:last2}
      6xs^2\geq 12s^2+(38xs+9xs+33x)
   \end{equation}

   For $s\geq 17$, the term $3xs^2$ bounds both summands on the right
   hand side in \eqnref{eq:last2} for all $x\geq 4$.

   Similarly, for $x\geq 34$ we can bound both summands in the
   following manner
   $$0.36xs^2\geq 12s^2\; \mbox{ and }\; (6-0.36)xs^2\geq
   38xs+9xs+33x$$
   for all $s\geq 9$.

   Thus we are left with finitely many cases, namely
   $$9\leq s\leq 16\;\mbox{ and }\; 4\leq x\leq 33$$
   for which we check \eqnref{eq:nowa2} directly by
   some dull computations omitted here.

   It remains to check \eqnref{eq:mmm} with cases $x=3$ and $x=2$
   which we leave to a motivated enough reader. Note that
   Lemma is not true in the case $x=2$ and $m_2=\dots=m_s=1$
   (which is covered by Proposition \ref{drugie}).
\endproof
   We are now in the position to finish the proof of part b) in Theorem
   B.

\begin{theorem}\label{result}
   Let $m_1\geq m_2 \geq ...\geq m_s\geq \frac{m_1}{2}$ be positive
   integers.
   We consider the ideal
   $\cali=\mathfrak{m}_{P_1}^{m_1}\cap\mathfrak{m}_{P_2}^{m_2}\cap...\cap\mathfrak{m}_{P_s}^{m_s}$
   of $s \geq 9$ general (fat) points. Then $\cali^{(2r)}\subset \calm^r\cali^r$
   for all $r\geq 1$.
\end{theorem}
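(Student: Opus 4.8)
The plan is to run exactly the machine used in the proof of Theorem \ref{result2}, with the numerical input now supplied by Lemma \ref{comb1}.

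First I would peel off the two degenerate configurations. If $m_1=1$, the monotonicity constraint $m_s\geq m_1/2$ forces $m_1=\dots=m_s=1$, and the containment is exactly Theorem \ref{thm:hahu}. If $m_1=2$ and $m_2=\dots=m_s=1$, then $\cali$ is supported on one double point together with $s-1\geq 8$ simple points; relabelling the double point as $P_0$ this is precisely the situation of Proposition \ref{drugie} (with $m_0=2$, so that the total number of points is $s\geq 9$), and we are done. In every remaining case $m_1\geq 2$ and we are outside the single configuration that Lemma \ref{comb1} has to exclude, so the inequality \eqnref{eq:mmm} is available.

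Next, with $\Sigma=m_1+\dots+m_s$ and $Q=m_1^2+\dots+m_s^2$ as in Lemma \ref{comb1}, I would put
$$D:=\sum_{i=1}^{s}m_i(m_i+1)+\rho(m_4)=Q+\Sigma+\rho(m_4),\qquad R:=\frac{2}{\sqrt{s+1}}\,\Sigma.$$
Since $R^2-3R=\frac{4}{s+1}\Sigma^2-\frac{6}{\sqrt{s+1}}\Sigma$, the inequality $R^2-3R\geq D$ is nothing but \eqnref{eq:mmm}. Feeding this into Lemma \ref{lem:gwiazdka} produces an integer $d$ with $d(d+3)\geq D$ --- which is condition \eqnref{eqreg} of Theorem \ref{zastosowanie} --- and with $d+2\leq R=\frac{2}{\sqrt{s+1}}\Sigma$, which secures condition \eqnref{eqalpha}, since $\frac{2}{\sqrt{s+1}}\sum_{i=1}^{s}m_i$ is one of the arguments of the maximum there.

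It then remains to arrange \eqnref{eq:m12}. If the $d$ just obtained satisfies $d\geq m_1+m_2$, we invoke Theorem \ref{zastosowanie} and are finished. Otherwise set $d':=m_1+m_2>d$: now \eqnref{eq:m12} is immediate; \eqnref{eqalpha} still holds because $d'+2=m_1+m_2+2\leq m_1+m_2+m_3+m_4$ (as $m_3,m_4\geq 1$); and \eqnref{eqreg} persists because $x\mapsto x(x+3)$ is increasing on $x\geq 0$ and $d'>d$, whence $d'(d'+3)>d(d+3)\geq D$. In either case Theorem \ref{zastosowanie} yields $\cali^{(2r)}\subset\calm^r\cali^r$ for all $r\geq 1$, completing the proof.

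Since the only genuine work --- the estimate $R^2-3R\geq D$ --- is already carried by Lemma \ref{comb1}, I do not expect a real obstacle; what is left is pure bookkeeping built on Theorem \ref{zastosowanie} and Lemmas \ref{lem:gwiazdka} and \ref{comb1}. The one place deserving a moment of care is to check that the sequences $(1,\dots,1)$ and $(2,1,\dots,1)$ are exactly those left out of the main argument and are genuinely covered by Theorem \ref{thm:hahu} and Proposition \ref{drugie} respectively, so that no admissible multiplicity vector slips through.
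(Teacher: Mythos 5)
Your proposal is correct and follows essentially the same route as the paper: dispose of the all-ones case via Theorem \ref{thm:hahu} and the $(2,1,\dots,1)$ case via Proposition \ref{drugie}, then note that \eqnref{eq:mmm} is exactly $R^2-3R\geq D$ for $R=\frac{2}{\sqrt{s+1}}\Sigma$, $D=Q+\Sigma+\rho(m_4)$, apply Lemma \ref{lem:gwiazdka} to get $d$ satisfying \eqnref{eqreg} and \eqnref{eqalpha}, and if $d<m_1+m_2$ replace it by $d'=m_1+m_2$ before invoking Theorem \ref{zastosowanie}. Your extra remarks (monotonicity of $x(x+3)$ for \eqnref{eqreg} with $d'$, and the bookkeeping of which sequences are excluded) only make explicit what the paper leaves implicit.
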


\begin{proof}
   The case $m_1=\dots=m_s=1$ is covered by Theorem
   \ref{thm:hahu}.\\
   The case $m_1=2$ and $m_2=\dots m_s$ follows from Proposition
   \ref{drugie}.\\
   In the remaining cases the inequality \eqnref{eq:mmm}
   in Lemma \ref{comb1} is equivalent to inequality
   $$R^2-3R\geq D$$
   if we define
   $$D=\sum m_i(m_i+1)+\varrho(m_4)\;\mbox{ and }\;
   R=\frac{2}{\sqrt{s+1}}\sum m_i.$$
   By Lemma \ref{lem:gwiazdka} we know then that there exists an integer $d$
   satisfying \eqref{eqreg} and \eqref{eqalpha}. If $d \geq m_1+m_2$
   we are done by Theorem \ref{zastosowanie}. Otherwise, exactly as in the proof of Theorem
   \ref{result2},
   we take
   $d'=m_1+m_2$ and  we observe that  since
   $d'+2\leq m_1+m_2+m_3+m_4$, the integer
   $d'$ satisfies inequalities \eqref{eqreg} and \eqref{eqalpha}; so that
   Theorem
  \ref{zastosowanie} ends the proof.
\end{proof}

%\paragraph*{Acknowledgement.}

%*****************************************************************************

%***************************************************************************** % Addresses

\bigskip \small

\bigskip
   Marcin Dumnicki,
   Jagiellonian University, Institute of Mathematics, {\L}ojasiewicza 6, PL-30-348 Krak\'ow, Poland

\nopagebreak
   \textit{E-mail address:} \texttt{Marcin.Dumnicki@im.uj.edu.pl}

\bigskip
   Tomasz Szemberg,
   Instytut Matematyki UP,
   Podchor\c a\.zych 2,
   PL-30-084 Krak\'ow, Poland.

\nopagebreak
   \textit{E-mail address:} \texttt{szemberg@up.krakow.pl}

\bigskip
   Halszka Tutaj-Gasi\'nska,
   Jagiellonian University, Institute of Mathematics, {\L}ojasiewicza 6, PL-30-348 Krak\'ow, Poland

\nopagebreak
   \textit{E-mail address:} \texttt{Halszka.Tutaj@im.uj.edu.pl}

%*****************************************************************************

\end{document}